\pgfplotsset{compat=1.11}
\newlength\fwidth
\title{Adaptive meshfree approximation for linear elliptic partial differential equations with PDE-greedy kernel methods}
\author[1,4]{Tizian Wenzel \thanks{wenzel@math.lmu.de}}
\author[2]{Daniel Winkle \thanks{daniel.winkle@mathematik.uni-stuttgart.de}}
\author[3]{Gabriele Santin \thanks{gabriele.santin@unive.it}}
\author[4]{Bernard Haasdonk \thanks{haasdonk@mathematik.uni-stuttgart.de}}
\affil[1]{Department of Mathematics, Ludwig-Maximilians-Universität München, Germany}
\affil[2]{Institute for Stochastics and Applications, University of Stuttgart, Germany}
\affil[3]{Department of Environmental Sciences, Informatics and Statistics, Ca' Foscari University of Venice, Italy}
\affil[4]{Institute of Applied Analysis and Numerical Simulation, University of Stuttgart, Germany}
\newcommand{\Id}{\mathrm{Id}}
\newcommand{\F}{\mathcal{F}}
\newtheorem{assumption}{Assumption}
\newcommand{\hide}[1]{ }
\begin{document}

\maketitle %

\begin{abstract}

  We consider meshless approximation for solutions of
  boundary value problems (BVPs)
  of elliptic Partial Differential Equations (PDEs)
  via symmetric kernel collocation. We discuss the importance of the choice of
  the collocation points, in particular by using greedy kernel methods.
We introduce a scale of PDE-greedy selection criteria that generalizes existing techniques, such as
the PDE-$P$-greedy and the PDE-$f$-greedy rules for collocation point selection.

For these greedy selection criteria 
we provide bounds on the approximation error in terms of the number of greedily selected points and
analyze the corresponding convergence rates.
This is achieved by a novel analysis of Kolmogorov widths of special sets of BVP point-evaluation functionals.
Especially, %
we prove that target-data dependent algorithms that make use of the right hand side functions of
the BVP exhibit faster convergence rates than the target-data independent PDE-$P$-greedy.
The convergence rate of the PDE-$f$-greedy possesses a dimension independent rate, 
which makes it amenable to mitigate the curse of dimensionality.

The advantages of these %
greedy algorithms are highlighted
by numerical examples.

\end{abstract}

\newpage

\hide{
\section*{Criticism by reviewers \& Notes}

\begin{center}
\textcolor{red}{Overview page}
\end{center}

\begin{itemize}
\item report 1:
\begin{itemize}
\item ``rather superficial summary of PDEs''
\item ``rather simple numerical experiments in two spatial dimensions''
\item ``general concern is that the requirements stated in Theorem 7 ... are rather strong''
\end{itemize}
\item report 2:
\begin{itemize}
\item ``quite lengthy''
\item ``impression that it is not very rigorous from a mathematical point of view''
\item ``seem not to provide any new insight in the theory of PDEs''
\end{itemize}
\item mathematical imprecise: ``nice'' domain
\item no new insights in the theory of PDEs (however we do not aim at this!)
\item they didn't like the numerical examples that much (our solutions are smooth all the time)
\item strong restrictive assumptions on the considered PDEs (smooth coefficients, smooth boundary, ...). What happens for interesting examples of eg discontinuous PDE solutions?
\item misunderstanding wrt to Kolmogorov widths (eg. ``prove estimates on the Kolmogorov n-width of the PDE's solution''): We only consider Kolmogorov width of the set $\Lambda$ within the RKHS
\item ``I am wondering about the statement that Kolmogorov 
n-widths have not been studied yet in case of PDEs.'': In the beginning of Section 3 we wrote ``As remarked in [30], a thorough analysis of those Kolmogorov $n$-widths for PDEs does not seem to be available so far in the literature.'' - we need to make clear that we do analyze $n$-widths of PDEs, but rather the $n$-widths of $\mathcal{F}$ in $\ns$.
\end{itemize}

Thus
\begin{itemize}
\item Shorten the paper?
\item More interesting numerical examples possible?
\item comparison to FEM? (and use examples which point out our benefits)
\end{itemize}

\newpage

} %

\section{Introduction} \label{sec:introduction}

Kernel methods \cite{wendland2005scattered,fasshauer2007meshfree,steinwart2008support} are a popular class of techniques in approximation theory, numerical mathematics and machine learning.
They revolve around the notion of a kernel $k$. Given a non-empty set $\Omega$, a real valued kernel is defined as a symmetric function $k: \Omega \times \Omega \rightarrow \R$, and here we focus on domains $\Omega \subset \R^d$.

For given points $X_n := \{x_1, \dots, x_n \} \subset \Omega$, the kernel matrix $K_{X_n} \in \R^{n \times n}$ is defined as $(K_{X_n})_{ij} := (k(x_i, x_j))_{ij}$, $i,j=1,\ldots,n$. 
A kernel is called strictly positive definite (s.p.d.) if the associated kernel matrix is
positive definite for any choice of pairwise distinct points $X_n \subset \Omega, n \in \N$.
Those s.p.d.\ kernels give rise to a unique Hilbert space, the so called \textit{Reproducing Kernel Hilbert Space (RKHS)} denoted as $\ns$.
The RKHS is a space of real-valued functions from $\Omega$ to $\R$ with inner product $\langle \cdot, \cdot \rangle_{\ns}$, 
and it is sometimes also called \textit{native space}. 
Within the RKHS, the kernel $k$ acts as a reproducing kernel, i.e.\ %
(i) $k(\cdot,x) \in \ns$ $\forall x\in\Omega$, and (ii)
$f(x) = \langle f, k(\cdot,x) \rangle_{\ns} \, \forall x \in \Omega, \forall f \in \ns$, which
is called the reproducing property.
Kernel interpolation can be characterized and analyzed in this framework:
For a given closed subspace $V \subset \ns$ we denote the $\ns$-orthogonal projection onto $V$ with $\Pi_V: \ns \to V$.
Given $n$ interpolation points $X_n$, the unique minimum-norm interpolant $s_n$ to a function $f \in \ns$ is equal to the $\ns$-orthogonal projection of $f$ onto the subspace $V_{X_n} := \Sp \{k(\cdot, x_i), x_i \in X_n \} \subset \ns$,
i.e.\ it holds
\begin{align}\label{eq:kernel_interpolant_standard}
s_n(\cdot) := \Pi_{V_{X_n}} (f)(\cdot) = \sum_{j=1}^n \alpha_j k(\cdot , x_j).
\end{align}
The coefficients $\alpha_j$ can be determined by the interpolation conditions 
\begin{align*}
s_n(x_i) &= f(x_i) \quad \forall i=1, \dots, n  \quad
\Leftrightarrow \quad K_{X_n}\alpha = (f(x_j) )_{j=1}^n,
\end{align*}
where $\alpha = (\alpha_1, \dots, \alpha_n)^\top \in \R^n$ is the vector of the coefficients.
A standard way to quantify the error between the function $f$ and the interpolant $s_n$ in the $\Vert \cdot \Vert_{L^\infty(\Omega)}$-norm is using the so called power function $P_{X_n}$, which is given as 
\begin{align}
\begin{aligned}
\label{eq:power_func_standard}
P_{X_n}(x) &:= \Vert k(\cdot,x) - \Pi_{V_{X_n}} (k(\cdot , x )) \Vert_{\ns} \\
&= \sup_{0 \neq f \in \ns} \frac{| f(x) - \Pi_{V_{X_n}} (f) (x) |}{\Vert f \Vert_{\ns}},
\end{aligned}
\end{align} 
and thus allows to estimate the residual $f - s_n$ as
\begin{align}
\label{eq:standard_power_func_bound}
| (f-s_n)(x) | \leq P_{X_n}(x) \cdot \Vert f \Vert_{\ns}. 
\end{align}
In order to obtain a good approximation, a suitable choice of the points $X_n \subset \Omega$ is required, even if it is usually unclear a priori.
A viable option is the use of greedy algorithms \cite{Temlyakov2008}, which start with an empty set $X_0 := \emptyset$ and then add further points step by step as $X_{n+1} := X_n \cup \{ x_{n+1} \}$.
The next interpolation point $x_{n+1}$ is usually chosen according to some optimality criterion, and for
greedy kernel interpolation common choices are the so called $P$-greedy \cite{marchi2005optimal}, $f \cdot P$-greedy (or \emph{psr}-greedy) \cite{Dutta2020},
$f$-greedy \cite{SchWen2000}, and $f/P$-greedy \cite{mueller2009komplexitaet} criteria.

A unifying analysis of those standard greedy kernel interpolation algorithms was recently provided in \cite{wenzel2022analysis} under the notion of \textit{$\beta$-greedy algorithms}, that we will generalize in the current article from function interpolation to PDE collocation. 
Among the greedy kernel interpolation algorithms, the $P$-greedy scheme does
not use the target values of the function approximation problem for
selecting the points. Therefore, this variant is called {\em target-data independent}. In contrast, the other mentioned
selection criteria make use of the target values, hence adapt to the target function, and
are denoted as {\em target-data dependent}.
This terminology will be adopted to the PDE approximation schemes under consideration, where the ``target-data'' will be
the point evaluations of the right hand side of the PDE problem.

The theory of kernel interpolation is not limited to the interpolation of function values. Indeed it is possible to consider arbitrary functionals $\Lambda_n := \{ \lambda_1, \dots, \lambda_n \} \subset \ns'$,
where $\ns'$ denotes the dual space of $\ns$,
and obtain a function $s_n$ of minimum norm that satisfies
\begin{align}\label{eq:gen_interp_cond}
\lambda_i(s_n) = \lambda_i(f) \quad \forall i=1, \dots, n.
\end{align}

This procedure is called \textit{generalized interpolation} \cite[Chapter 16]{wendland2005scattered}.
It is possible to show that, similarly to Eq.\ \eqref{eq:kernel_interpolant_standard}, it holds
\begin{align}
\label{eq:kernel_interpolant_generalized}
s_n(\cdot) = \Pi_{V_n}(f)(\cdot) = \sum_{j=1}^n \alpha_j v_{\lambda_j}(\cdot), %
\end{align}
where we now have $V_n := \Sp \{ v_{\lambda_i} ~ | ~ \lambda_i \in \Lambda_n \}$ and $v_{\lambda_1}, \dots, v_{\lambda_n} \in \ns$ are the unique Riesz
representers of the functionals $\lambda_1, \dots, \lambda_n$.
Observe that for the special case of standard kernel interpolation, due to the reproducing property, the Riesz representer of the point evaluation functionals 
$\delta_{x_j}$ are simply given by $k(\cdot, x_j)$, such that the generalized interpolant from Eq.\ \eqref{eq:kernel_interpolant_generalized} again reduces to 
Eq.\ \eqref{eq:kernel_interpolant_standard}, and Eq.\ \eqref{eq:kernel_interpolant_generalized} is indeed a generalized form of interpolation. 

This framework of generalized interpolation can especially be applied to approximating solutions of
linear PDEs \cite{renardy2006introduction,lions2012boundary2}. 
For this we consider linear elliptic boundary value problems (BVPs)
\begin{align}
\begin{aligned}
\label{eq:pde}
Lu &= f \quad \text{on } \Omega \subset \R^d \\
Bu &= g \quad \text{on } \partial \Omega,
\end{aligned}
\end{align}
with an elliptic differential operator $L$ of second order, for example the
Laplace operator $L=-\Delta$, and a suitable boundary operator $B$.
In order to keep the focus on symmetric kernel collocation and
the distribution of the collocation points, 
we do not aim for the most general case of BVPs,
but rather restrict to problems with Dirichlet boundary conditions,
hence we assume $B=\Id$ throughout the remaining presentation.
We remark that the results can also be extended to e.g.\ Neumann boundary conditions.

We propose and analyze methods that are based on collocation \cite{franke1998solving,franke1998convergence,fornberg2015solving,chen2016reduced}, hence we focus on cases
where we have well-posedness of the BVP in this strong formulation,
i.e.\ the solution is assumed to be classically differentiable of
sufficient order with corresponding continuous derivatives.
We comment on more general cases in the outlook.  

The constraints imposed by the PDE and the boundary values can be collected in
two sets of functionals, i.e.\ by introducing
\begin{equation}
    \left.
    \begin{aligned}
        \Lambda_L &:= \{ \delta_x \circ L : x \in \Omega \} \\
		\Lambda_B &:= \{ \delta_x : x \in \partial \Omega \} \quad \qquad
    \end{aligned}
    \right\}
    \quad \Lambda := \Lambda_L \cup \Lambda_B. \label{eq:lambda_sets}
\end{equation}

In this paper, we address the problem of designing and analyzing greedy collocation schemes for the approximation of the solution of these kind of problems.
First, we extend and unify the framework for the analysis of standard greedy kernel interpolation algorithms from \cite{wenzel2022analysis} to generalized 
greedy kernel interpolation. 
The analysis of these generalized schemes require decay estimates for certain Kolmogorov widths, which we derive here for the first time answering an open 
question posed in~\cite{schaback2019greedy}. 
Combining these new rates with the extension of the framework from \cite{wenzel2022analysis}, we are able to obtain error estimates for the whole family of 
generalized $\beta$-greedy schemes. These estimates are comparable with the rates obtained with optimally placed points, and additionally, when using full 
adaptivity ($\beta=1$), the  
resulting convergence rates break the curse of dimensionality, in the sense of having a
dimension-independent decay factor. This indicates that the approach may especially be beneficial
for high-dimensional PDEs, where standard mesh-based techniques cannot easily be applied.

The paper is structured as follows.
\Cref{sec:background_results} starts by giving required background information on PDEs, greedy kernel interpolation, generalized interpolation and pointing to the corresponding literature.
\Cref{sec:analysis_kolm_width} provides an analysis of Kolmogorov $n$-widths related to
the functionals of the considered BVP. %
\Cref{sec:analysis_fgreedy} extends the %
target-data dependent greedy analysis of \cite{wenzel2022analysis} to generalized kernel interpolation. 
\Cref{sec:conv_rates_pde_greedy} combines the results of the previous sections to derive convergence rates of the error $\Vert u - s_n \Vert_{L^\infty(\Omega)}$ between the approximant and the true solution of the PDE.
\Cref{sec:numerical_exp} presents numerical experiments on both low-dimensional and
high-dimensional BVPs.
\Cref{sec:outlook_ideas} gives a conclusion and formulates an outlook.

\section{Background results} \label{sec:background_results}

In the following, we collect more required background information about partial differential equations (\Cref{subsec:background_pde}), 
interpolation with translational invariant kernels (\Cref{subsec:transl_inv_kernel}), greedy and generalized interpolation (\Cref{subsec:generalized_kernel_interpolation}),
as well as other kernel methods to solve PDEs (\Cref{subsec:solving_pde_collocation}).

\subsection{A priori bounds for elliptic PDEs}
\label{subsec:background_pde}

  We make use of the following assumptions, which especially give rise to a
  well-posed PDE boundary value problem.

\begin{assumption}
\label{ass:assumptions}
Let $\Omega \subset \R^d$ be a bounded Lipschitz domain with piecewise
smooth
boundary $\partial \Omega$ of dimension $d-1$. 
Let $L$ be a uniformly elliptic operator of second order
of the form 
\begin{equation}
  (Lu)(x) := \sum_{i,j=1}^d a_{ij}(x) \frac{\partial}{\partial {x_i}} \frac{\partial}{\partial x_j} u(x) + \sum_{i=1}^d b_i(x)
  \frac{\partial}{\partial x_i} u(x) + c(x) u(x),
  \label{eqn:second-order-operator}
\end{equation}
with bounded and smooth\footnote{
  Similar to results in literature that we will refer later, e.g.\ \cite{fuselier2012scattered},
  we assume infinite smoothness for the boundary pieces, which then are smooth
  manifolds.
  However, we anticipate that by deliberately tracking the exact derivative
  requirements during the analysis, the same results can be obtained for sufficiently high
  finite differentiability. The same applies to
  coefficient functions of the differential operator, which for simplicity we assume to be
  $C^\infty(\Omega)$ functions.} 
coefficient functions $a_{ij},b_i, c$ on $\Omega$
with $c\leq 0$. %
We assume to have Dirichlet boundary values $g \in \mathcal{C}(\partial \Omega)$,
PDE right hand side
$f \in \mathcal{C}(\Omega)$ %
and there exists a unique strong solution
$u\in \mathcal{C}^2(\Omega)\cap \mathcal{C} (\bar \Omega)$ of the BVP (Eq.~\eqref{eq:pde}).
\end{assumption}
The assumption on the existence of a unique solution can be removed, if one
leverages suitable existence and uniqueness theorems for special PDE problems.

Kernel methods are intrinsically related to the use of the $\Vert \cdot \Vert_{L^\infty(\Omega)}$ norm
because of the reproducing property.
Therefore we recall
an a-priori bound for
the $\Vert \cdot \Vert_{L^\infty(\Omega)}$ norm of the solution of Eq.~\eqref{eq:pde} in dependence of
the data, 
cf.\ \cite[Theorem 4.11]{renardy2006introduction}
\begin{equation}\label{eq:pre_max_prin}
\Vert u \Vert_{L^\infty(\Omega)} \leq  C \left( \Vert f \Vert_{L^\infty(\Omega)} + \Vert g \Vert_{L^\infty(\partial \Omega)} \right).
\end{equation}
Given a (kernel) approximant $s_n$, the difference $v:=u-s_n$ solves the same
BVP but with data $L v = f - L s_n$ and $g-s_n$, so that~\eqref{eq:pre_max_prin} implies that
\begin{align}
\label{eq:maximum_principle_elliptic_operators}
\Vert u - s_n \Vert_{L^\infty(\Omega)} &\leq C \left( \Vert f - Ls_n \Vert_{L^\infty(\Omega)} + \Vert g - s_n \Vert_{L^\infty(\partial \Omega)} \right),
\end{align}
which allows to conclude convergence rates on $\Vert u - s_n \Vert_{L^\infty(\Omega)}$ based on convergence rates on $\Vert f - Ls_n \Vert_{L^\infty(\Omega)}$ and $\Vert g - s_n \Vert_{L^\infty(\partial\Omega)}$, which will be derived in \Cref{sec:analysis_fgreedy}. The convergence rate analysis in the current article will be possible by assuming that the target function has some suitable Sobolev regularity, which is satisfied due to the assumed classical differentiability of the solution and the assumptions on the domain.

We emphasize that our approach can straightforwardly be extended to any
linear PDE problem of potential higher order, parabolic or hyperbolic or mixed type,
as long as a suitably regular solution exists and a corresponding a priori bound is available.
The restriction to second order elliptic problems is mainly for simplifying the
presentation and reflecting that our experiments will only cover such elliptic problems. 
\subsection{RKHS of translational invariant kernels and Sobolev spaces}
\label{subsec:transl_inv_kernel}

A translational invariant kernel can be written as
\begin{align}
\label{eq:transl_inv_kernel}
k(x, y) = \Phi(x - y), \qquad \text{for all\ } x, y \in \R^d,
\end{align}
for some function $\Phi: \R^d \rightarrow \R$.
Translational invariant kernels can be characterized by the decay of the Fourier
transform $\hat{\Phi}$ of $\Phi$, 
i.e.\ we assume there exist constants $c, C>0, \tau > d/2$, such that
\begin{align}
\label{eq:rbf_asymptotic_sobolev}
c (1+ \Vert \omega \Vert^2)^{-\tau} \leq \Phi(\omega) \leq C(1+ \Vert \omega \Vert^2)^{-\tau}
\end{align}
for all $\omega \in \mathbb{R}^d$. 

In this case, $\mathcal{H}_k(\R^d)$ is norm equivalent to a Sobolev space,
i.e.\ $\mathcal{H}_k(\R^d) \asymp H^\tau(\mathbb{R}^d) = W_2^\tau(\R^d)$, where the order $\tau>d/2$ is possibly fractional. %
This norm equivalence can be extended to the RKHS $\ns$ on bounded domains $\Omega \subset \R^d$ under suitable conditions on the boundary $\partial \Omega$, such as its Lipschitz continuity,
i.e.\ it holds $\ns \asymp H^\tau(\Omega) = W_2^{\tau}(\Omega)$. 

In order to estimate the error $\Vert f - s_n \Vert_{L^\infty(\Omega)}$ between a function $f \in \ns$ and its kernel interpolant $s_n$, 
one usually makes use of sampling inequalities and corresponding zero lemmata,
which exist for Sobolev spaces or spaces of analytic functions \cite{wendland2005approximate,narcowich2005sobolev}. 
They bound the error in terms of the fill distance $h = h_{X, \Omega}$, which describes the largest hole within $\Omega$ where no point from $X$ exists, i.e.\
\begin{align}
\label{eq:fill_distance}
  h := %
  h_{X, \Omega} := \sup_{x \in \Omega} \min_{x_j \in X} \Vert x - x_j \Vert.
\end{align}
The same definition of the fill distance can also be used for the boundary $\partial \Omega$ of $\Omega$.
Here we recall \cite[Theorem 2.2]{gia2006continuous},
where $| \cdot |_{W_q^m(\Omega)}$ denotes the Sobolev semi-norm, which uses only the highest derivatives, $\Vert f|_X \Vert_\infty := \max_{x \in X} |f(x)|$, and $(x)_+ := \max\{x,0\}$.
\begin{theorem}
\label{th:estimate_derivatives_wendland}
Suppose $\Omega \subset \mathbb{R}^d$ is a bounded domain satisfying an interior cone condition and having a Lipschitz boundary. Let $X \subset \Omega$ be a discrete set with sufficiently small fill distance $h \leq h_0$. Let $\tau = k + s$ with $k \in \mathbb{N}, 0 \leq s < 1, 1 \leq p < \infty, 1 \leq q \leq \infty, m \in \mathbb{N}_0$ with $k>m+d/p$ if $p>1$ or $k \geq m + d/p$ if $p=1$. Then for each $f \in W_p^\tau(\Omega)$ we have that
\begin{equation*}
|f|_{W_q^m(\Omega)} \leq C \left( h^{\tau-m-d(1/p-1/q)_+} |f|_{W_p^{\tau}(\Omega)} + h^{-m} \Vert f|_X \Vert_\infty \right),
\end{equation*}
where $C>0$ is a constant independent of $f$ and $h$.
\end{theorem}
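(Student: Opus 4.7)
The plan is to establish this sampling inequality via the classical strategy of combining a local polynomial reproduction with a Bramble--Hilbert estimate and a finite-overlap covering argument, as developed by Narcowich, Ward and Wendland and later adapted to fractional Sobolev orders. The backbone is: approximate $f$ locally by a polynomial (Bramble--Hilbert), replace pointwise values of that polynomial by a weighted sum over nearby data sites (polynomial reproduction on cones), and finally glue the local bounds together.

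First I would construct, using only the interior cone condition on $\Omega$, a local polynomial reproduction of degree $\ell = k-1$: there exist $h_0, C_1, C_2 > 0$ depending only on $\Omega$, $d$ and $\ell$ such that for every $X$ with fill distance $h \leq h_0$ and every $x \in \overline{\Omega}$ one can find coefficients $\{u_i(x)\}_{x_i \in X}$, supported in $\{i : \|x-x_i\| \leq C_2 h\}$, which satisfy $\sum_i u_i(x)\, q(x_i) = q(x)$ for all polynomials $q$ of degree $\leq \ell$ and $\sum_i |u_i(x)| \leq C_1$. This is the standard Wendland norming-set construction on cones.

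Second, cover $\overline{\Omega}$ by pieces $B_j := B(z_j, C_2 h) \cap \Omega$ with uniformly bounded overlap depending only on $\Omega$. On each $B_j$, a fractional Bramble--Hilbert estimate valid for $\tau = k + s$ on star-shaped Lipschitz pieces yields a polynomial $p_j$ of degree $\leq k-1$ with
\begin{align*}
|f - p_j|_{W_p^i(B_j)} \leq C\, h^{\tau - i}\, |f|_{W_p^{\tau}(B_j)}, \qquad 0 \leq i \leq k.
\end{align*}
The hypothesis $k > m + d/p$ (respectively $k \geq m + d/p$ for $p=1$) is exactly what is needed for a scaled Sobolev embedding on $B_j$ to upgrade this to $\|f - p_j\|_{C^m(B_j)} \leq C\, h^{\tau - m - d/p}\, |f|_{W_p^{\tau}(B_j)}$.

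Third, for $|\alpha| \leq m$ and $x \in B_j$ I would split $D^\alpha f(x) = D^\alpha p_j(x) + D^\alpha (f - p_j)(x)$. The second term is controlled directly by the Sobolev-embedded estimate above. For the first, a Markov-type inequality on polynomials of degree $\leq k-1$ on $B_j$ gives $|D^\alpha p_j(x)| \leq C h^{-m}\|p_j\|_{L^\infty(B_j)}$, and expressing $p_j$ through the reproduction as $p_j(y) = \sum_i u_i(y) p_j(x_i)$ together with $|p_j(x_i)| \leq |f(x_i)| + \|f - p_j\|_{L^\infty(B_j)}$ yields a pointwise bound
\begin{align*}
|D^\alpha f(x)| \leq C\, h^{\tau - m - d/p}\, |f|_{W_p^{\tau}(B_j)} + C\, h^{-m}\, \|f|_X\|_{\infty}.
\end{align*}
Raising to the $q$-th power, integrating over $B_j$, summing over $j$ (finite overlap), and using $\|g\|_{L^q(B_j)} \leq |B_j|^{(1/q-1/p)_+}\|g\|_{L^p(B_j)}$ with $|B_j| \sim h^d$ produces the factor $h^{-d(1/p-1/q)_+}$, converts the first term into the global seminorm $|f|_{W_p^{\tau}(\Omega)}$, and delivers the claimed bound.

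The main obstacle is the first step: producing a local polynomial reproduction with small support and uniformly bounded weights under only an interior cone condition. The geometric control over $\Omega$ and the resulting norming-set argument sit at the heart of the proof, and every constant downstream ultimately traces back to $h_0, C_1, C_2$ obtained there. The fractional Bramble--Hilbert component and the $L^p$-to-$L^q$ bookkeeping are technical but follow entirely standard patterns once that machinery is in place.
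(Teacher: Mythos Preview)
The paper does not prove this theorem at all: it is simply quoted verbatim from \cite[Theorem~2.2]{LEGIA2006124} as a known sampling inequality, with no argument given in the present paper. So there is no ``paper's own proof'' to compare against.

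Your sketch is nonetheless the right one, and it matches the standard proof pattern in the cited literature (Narcowich--Ward--Wendland and its fractional-order extensions): local polynomial reproduction on cones from norming sets, a fractional Bramble--Hilbert lemma on star-shaped patches, a Markov inequality to transfer derivatives from the polynomial to the data, and a finite-overlap summation to globalize. The identification of the norming-set/polynomial-reproduction step as the crux is accurate; everything else is bookkeeping once $h_0,C_1,C_2$ are in hand. There is no substantive gap in your outline for the purposes of this paper, but be aware that in the context of the present article this result is treated as a black box, so a full write-up would belong in the cited source rather than here.
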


Similar statements as in \Cref{th:estimate_derivatives_wendland} can also be derived for interpolation on manifolds under some assumptions on the shape of the manifold.
In the following we are particularly interested in statements for the boundary of the domain $\Omega$, i.e.\ the manifolds $\mathbb{M} \subseteq \partial \Omega$. 
For this, we recall the result \cite[Lemma 10]{fuselier2012scattered}.

\begin{theorem}
\label{th:sampling_inequ_manifolds}
Let $\mathbb{M}$ be a smooth manifold of dimension $\tilde{d}$ and let $1 \leq p, q \leq \infty, t \in \R$ with $t > \tilde{d}/p$ if $p>1$ or $t \geq \tilde{d}$ if $p=1$. 
Let $\mu \in \N$ satisfy $0 \leq \mu \leq \lceil t - \tilde{d}(1/p - 1/q)_+ \rceil - 1$.
Also, let $X \subset \mathbb{M}$ be a discrete set with fill distance $h_{X, \mathbb{M}} \leq C_\mathbb{M}$. If $f \in W_p^t(\mathbb{M})$ satisfies $f|_X = 0$, 
then 
\begin{align*}
|f|_{W_q^\mu(\mathbb{M})} \leq C h_{X, \mathbb{M}}^{t - \mu - \tilde{d}(1/p - 1/q)_+} |f|_{W_p^t(\mathbb{M})}.
\end{align*}
\end{theorem}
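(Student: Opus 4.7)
The plan is to lift the known Euclidean sampling inequality (the zero-version of Theorem \ref{th:estimate_derivatives_wendland}) to the manifold setting via a chart-and-partition-of-unity argument. Since a smooth manifold is locally diffeomorphic to $\R^{\tilde d}$ via bi-Lipschitz chart maps, the Sobolev norms on $\mathbb{M}$ and on a chart domain in $\R^{\tilde d}$ are equivalent up to constants depending only on the chart. The fill distance is also preserved up to multiplicative constants. Thus the manifold estimate should follow from a local application of the Euclidean result plus a patching step.

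First, I would fix a finite atlas $\{(U_\alpha,\psi_\alpha)\}_{\alpha=1}^N$ of $\mathbb{M}$ (assuming compactness, which is the typical hypothesis behind such results) with $\psi_\alpha:U_\alpha \to V_\alpha\subset\R^{\tilde d}$ smooth diffeomorphisms onto bounded Lipschitz domains $V_\alpha$ satisfying an interior cone condition, and choose a smooth subordinate partition of unity $\{\chi_\alpha\}$ with $\mathrm{supp}(\chi_\alpha)\subset U_\alpha$. Writing $f=\sum_\alpha \chi_\alpha f$, I would define the pullbacks $f_\alpha:=(\chi_\alpha f)\circ\psi_\alpha^{-1}\in W_p^t(V_\alpha)$, extended by zero outside $\psi_\alpha(\mathrm{supp}(\chi_\alpha))$. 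Because $\psi_\alpha$ is a diffeomorphism with bounded derivatives on the compact set $\overline{U_\alpha}$, both the product rule applied to $\chi_\alpha f$ and the chain rule applied under $\psi_\alpha^{-1}$ yield, for some constants $C_\alpha>0$,
\begin{equation*}
|f_\alpha|_{W_p^t(V_\alpha)} \leq C_\alpha\, \|f\|_{W_p^t(U_\alpha)} \leq C_\alpha\, \|f\|_{W_p^t(\mathbb{M})},
\end{equation*}
and an analogous reverse-type bound relating $|f|_{W_q^\mu(U_\alpha)}$ to $|f_\alpha|_{W_q^\mu(V_\alpha)}$ plus lower-order terms that arise from differentiating $\chi_\alpha$.

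Second, since $f|_X=0$ and $\chi_\alpha$ is smooth, $f_\alpha$ vanishes on $X_\alpha := \psi_\alpha(X\cap U_\alpha)\subset V_\alpha$. The bi-Lipschitz property of $\psi_\alpha$ on $\overline{U_\alpha}$ gives $h_{X_\alpha,V_\alpha}\leq L_\alpha\, h_{X,\mathbb{M}}$ for a Lipschitz constant $L_\alpha$, and for $h_{X,\mathbb{M}}\leq C_{\mathbb{M}}$ small enough we are in the regime where the Euclidean sampling inequality applies. Invoking Theorem \ref{th:estimate_derivatives_wendland} in its zero-data form on each $V_\alpha$ (for which the stated conditions on $t,p,q,\mu$ translate directly, with the term $\|f_\alpha|_{X_\alpha}\|_\infty$ vanishing) yields
\begin{equation*}
|f_\alpha|_{W_q^\mu(V_\alpha)} \leq C\, h_{X_\alpha,V_\alpha}^{t-\mu-\tilde d(1/p-1/q)_+}\, |f_\alpha|_{W_p^t(V_\alpha)}.
\end{equation*}

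Finally, pushing forward to $\mathbb{M}$ and summing over the finite atlas via the triangle inequality on the seminorm, and absorbing the constants $C_\alpha, L_\alpha$ into a single global constant, gives the claim. The main obstacle I anticipate is the handling of the partition of unity in the reverse direction: the semi-norm $|f|_{W_q^\mu(\mathbb{M})}$ must be controlled by the collection of $|f_\alpha|_{W_q^\mu(V_\alpha)}$, and the Leibniz rule introduces lower-order terms in $\chi_\alpha f$ that are not obviously dominated by the highest-order seminorm on the right. This is standardly dealt with by using the full Sobolev norm on the right-hand side of the Euclidean inequality and then, after summation, using an interpolation / absorption argument together with the scaling in $h_{X,\mathbb{M}}$ (which is bounded by $C_\mathbb{M}$) to combine lower-order contributions into the final seminorm bound; alternatively, one chooses $\chi_\alpha$ with $\|\chi_\alpha\|_{C^t}$-norm controlled uniformly so that these lower-order terms remain harmless under the smallness of $h_{X,\mathbb{M}}$.
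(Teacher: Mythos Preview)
The paper does not prove this theorem at all: it is quoted verbatim as a background result from \cite[Lemma 10]{Wright2012}, so there is no proof in the paper to compare your proposal against. Your chart-and-partition-of-unity argument is in fact the standard route to lifting Euclidean sampling inequalities to compact manifolds and is essentially the strategy used in the original reference; the technical obstacle you flag (lower-order Leibniz terms from the cutoff functions, and recovering the \emph{seminorm} $|f|_{W_p^t(\mathbb{M})}$ rather than the full norm on the right-hand side) is genuine and is handled in that literature by carefully chosen atlases and an absorption argument exploiting the smallness of $h_{X,\mathbb{M}}$, much as you outline.
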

Here $h_{X, \mathbb{M}}$ is defined as the fill distance $h_{X,\Omega}$, but using the intristic distance on the manifold instead of the Euclidean one, and 
$W_p^t(\mathbb{M})$ is a Sobolev space on the manifold $\mathbb{M}$.

We remark that similar statements as in \Cref{th:estimate_derivatives_wendland} and \Cref{th:sampling_inequ_manifolds} are also available for interpolation with analytic kernels such as the Gaussian kernel. 
However, as we focus on Sobolev kernels
we do not recall those estimates here.

\subsection{Generalized kernel interpolation}\label{subsec:generalized_kernel_interpolation}

We consider the given set of functionals $\Lambda_n \equiv \{\lambda_1, \dots, \lambda_n\} \subset \ns'$, which we
assume to be linearly independent
and with associated Riesz representers $v_{\lambda_1}, \dots, v_{\lambda_n} \in \ns$, and
we recall that we use the ansatz space $V_n = \Sp \{ v_{\lambda_i} | \lambda_i \in \Lambda_n \}$. 
In order to bound approximation errors, 
a generalized power function similar to the standard power function in Eq.\ \eqref{eq:power_func_standard} can be defined
for generalized interpolation \cite[Chapter 16]{wendland2005scattered}
as
\begin{align}
\begin{aligned}
\label{eq:power_func_generalized}
P_{\Lambda_n}(\lambda) &:= \Vert v_\lambda - \Pi_{V_n}(v_\lambda) \Vert_{\ns} 
&= \sup_{0 \neq u \in \ns} \frac{|\lambda(u - \Pi_{V_n}(u))|}{\Vert u \Vert_{\ns}},
\end{aligned}
\end{align}
which immediately gives for all $u \in \ns$ that
\begin{align*}
| \lambda(u) - \lambda( \Pi_{V_n}(u) ) | \leq P_{\Lambda_n}(\lambda) \cdot \Vert u \Vert_{\ns}.
\end{align*}
We like to point out again that for $\lambda_1 := \delta_{x_1}, ..., \lambda_n := \delta_{x_n}$ the generalized approximant as well as the generalized power 
function boil down to their standard counterparts by identifying $x$ and $\delta_x$.

In order to avoid a frequent recomputation of the coefficients $\alpha_j$ within the standard
kernel expansion from Eq.\ \eqref{eq:kernel_interpolant_standard} and
Eq.\ \eqref{eq:kernel_interpolant_generalized}, typically the
Newton basis $\{ v_1, \dots, v_n \}$ \cite{Pazouki2011,schaback2019greedy}
that is obtained by orthonormalizing the Riesz representers $\{v_{\lambda_1}, \dots, v_{\lambda_n} \}$
is applied. 
Then the (standard or generalized) interpolant $s_n$ of $u \in \ns$ can be written as
\begin{align*}
s_n( \cdot ) = \sum_{j=1}^n \langle u, v_j \rangle_{\ns} v_j(\cdot),
\end{align*}
and it can be shown that the coefficients of the interpolant expressed in the Newton basis satisfy
\begin{align}
\label{eq:newtown_coefficient}
\langle u, v_n \rangle_{\ns} = \frac{\lambda_n(u - s_{n-1})}{P_{\Lambda_{n-1}}(\lambda_n)}.
\end{align}
Therefore, it also holds that
\begin{align}
\label{eq:ns_norm_via_sum}
\sum_{j=1}^\infty \left( \frac{|\lambda_j(u-s_{j-1})|}{P_{\Lambda_{j-1}}(\lambda_{j})} \right)^2 \leq \Vert u \Vert_{\ns}^2,
\end{align}
where the inequality is in fact an equality if and only if $s_n \stackrel{n \rightarrow \infty}{\longrightarrow} u$ in $\ns$.

\subsection{Solving PDEs by collocation} \label{subsec:solving_pde_collocation}

As motivated in the introduction in \Cref{sec:introduction}, the choice of $\Lambda$
according to Eq.~\eqref{eq:lambda_sets} naturally yields an approximation of the PDE solution in
the setting of generalized interpolation.
This approach is usually called symmetric kernel collocation since it is obtained by a symmetric
application of the differential operators to the two arguments of the kernel or, 
in other words, since the differential operators are applied both to construct the
ansatz in Eq.\ \eqref{eq:kernel_interpolant_generalized}, 
and to apply the generalized interpolation conditions of Eq.\ \eqref{eq:gen_interp_cond}. 
Since this method is based on the optimality principles described in \Cref{sec:introduction},
it can be analyzed by extending in a systematic manner the ideas used for plain function interpolation. 

If the approximant $s_n$ in Eq.\ \eqref{eq:kernel_interpolant_generalized} is instead assumed to be represented as $s_n:= \sum_{j=1}^n \alpha_j k(\cdot, x_j)$ 
independently of the specific problem to be solved, the resulting method is usually referred as unsymmetric kernel collocation, or method 
of \textit{Kansa} (see \cite{Kansa1990a,Kansa1990b}). This approach has the advantage of requiring less regularity on the kernel and of being usually 
computationally more flexible, but the corresponding theoretical analysis is more subtle \cite{Hon2001,Schaback2007}.

These ideas have been significantly extended by introducing a local approxi\-mation scheme known as Radial Basis Function Finite Difference
(RBF-FD) \cite{Tolstykh2003,Fornberg2015},
The convergence of this method has been extensively studied only
very recently \cite{tominec2021least}.

\section{Analysis of the Kolmogorov widths for PDE collocation functionals} \label{sec:analysis_kolm_width}

For the analysis of the generalized greedy kernel approximation algorithms in \Cref{sec:analysis_fgreedy} we will extend an analysis introduced in \cite{santin2017convergence,wenzel2022analysis}. 
For this, the knowledge about some Kolmogorov $n$-widths \cite{pinkus2012n},
that are associated to our approximation problems, are required: %
The Kolmogorov $n$-widths $d_n(\Lambda)$ of $\Lambda \subset \calh'$ in the dual Hilbert space $\calh'$ of a Hilbert space $\calh$ are defined as  %
\begin{align}
\label{eq:definition_kolmogorov_nwidth}
d_n(\Lambda) := d_n(\Lambda, \calh') &= \inf_{\substack{G_n \subset \calh' \\ \dim(G_n) = n}} \sup_{\mu \in \Lambda} \dist(\mu, G_n)_{\calh'}.
\end{align}
The shorthand notation $d_n(\Lambda)$ will be used whenever the Hilbert space $\calh$ or its dual $\mathcal{H}'$ are clear.
We remark that we can rewrite this Kolmogorov width:
It is possible to move this definition from the dual space $\calh'$ to the primal space $\calh$, since the Riesz representer theorem gives an isometry. 
Furthermore we calculate the distance in the Hilbert space by using orthogonal projections and obtain that
\begin{align}
\label{eq:kolmogorov_dual}
d_n(\Lambda, \calh') &\equiv \inf_{\substack{G_n \subset \calh' \\ \dim(G_n) = n}} \sup_{\mu \in \Lambda} \dist(\mu, G_n)_{\calh'} 
= \inf_{\substack{H_n \subset \calh \\ \dim(H_n) = n}} \sup_{\mu \in \Lambda} \dist(v_\mu, H_n)_{\calh} \notag \\
&= \inf_{\substack{H_n \subset \calh \\ \dim(H_n) = n}} \sup_{\mu \in \Lambda} \Vert v_\mu - \Pi_{H_n}(v_\mu) \Vert_{\calh}.
\end{align}

For the problem of PDE approximation, the Hilbert space is chosen as the RKHS of the kernel $k$, i.e.\ $\calh := \ns$.
The set $\Lambda$ is defined via the PDE equations in \eqref{eq:lambda_sets}, i.e.\ $\Lambda := \Lambda_L \cup \Lambda_B$. 
With $x_\lambda \in \Omega\ \cup\ \partial\Omega$ we will denote in the following the point related to a functional $\lambda \in \Lambda \subset \ns'$, i.e.\ where the evaluation takes place within $\Omega$ (for evaluations of $\lambda \in \Lambda_L$) or on its boundary $\partial \Omega$ (for evaluations of the boundary functionals $\lambda \in \Lambda_B$).
Since we consider kernels that satisfy Eq.~\eqref{eq:rbf_asymptotic_sobolev}, 
we have the norm equivalence of the RKHS $\ns$ to a Sobolev space $H^\tau(\Omega)$ for some $\tau > d/2$. 
Therefore we immediately obtain
\begin{align*}
\dist (v_\mu, H_n)_{H^\tau(\Omega)} &\asymp \dist (v_\mu, H_n)_{\ns} \\
\Rightarrow \qquad d_n(\Lambda, H^\tau(\Omega)') &\asymp d_n(\Lambda, \ns'),
\end{align*}
i.e.\ instead of analyzing the Kolmogorov $n$-widths related to the RKHS $\ns$ we can analyze the $n$-widths related to the Sobolev space $H^\tau(\Omega)$, because they provide the same asymptotics. 

As remarked in \cite{schaback2019greedy}, 
a thorough analysis of those Kolmogorov $n$-widths for PDE collocation
functionals  does not seem to be available so far in the literature.
We refer to \Cref{rem:relation_kolmogorov_widths} below for a discrimination
from other types of PDE-related Kolmogorov $n$-widths.
Nevertheless \cite{schaback2019greedy} conjectured that there should be a $\tau-$ (smoothness of the Hilbert space) and $d-$ (dimensionality of the domain 
$\Omega \subset \R^d$) dependent decay rate $\kappa(\tau, d)$ in the sense that
\begin{align*}
d_n(\Lambda, \calh') \leq C n^{-\kappa(\tau, d)} \quad \text{for } n \rightarrow \infty.
\end{align*}
Indeed it will be proven in the following that the Kolmogorov $n$-widths
related to our PDE problems exactly behave according to such a relation.
We stress that the difficulty in obtaining bounds for $d_n(\Lambda)$ is due to the fact that $\Lambda$ is the union of two sets of different functionals.

\begin{rem}[Relation to Kolmogorov $n$-widths for parametric PDEs]
\label{rem:relation_kolmogorov_widths}
We want to briefly relate and discriminate the present type of
Kolmogorov $n$-widths for PDE collocation functionals to another
width in the literature.
Especially in the context of parametric PDEs and projection-based model
order reduction, 
solution manifolds of the parametric solutions appear and need to be
approximated, e.g.\ \cite{Haasdonk2017}.
A crucial property for successful approximation in these scenarios is a rapid decay of the Kolmogorov $n$-widths of those solution manifolds. 
Especially in transport-dominated cases, the decay can be very slow for the wave equation \cite{greif2019decay} or the convection of discontinuities \cite{ohlberger2016reduced}.
In contrast, here we are not aiming at approximating solution manifolds of parametric PDE solutions, 
but addressing the Kolmogorov $n$-width of the set $\Lambda$ of the PDE collocation functionals of a single non-parametric PDE.
\end{rem}

The next \Cref{subsec:lower_upper_bound_kolmogorov} provides results to obtain lower and upper bounds on $d_n(\bigcup_{j=1}^M \Lambda_j)$ with help of 
$d_n(\Lambda_j)$, $j=1, \dots, M$. 
Subsequently we derive in \Cref{subsec:application_sobolev} precise decay rates for our special case of Sobolev spaces.

\subsection{Abstract setting: Bounds on $d_n(\Lambda)$}
\label{subsec:lower_upper_bound_kolmogorov}

We call this subsection the \textit{abstract setting}, because the following \Cref{prop:lower_bound} and \Cref{prop:upper_bound} provide bounds on the Kolmogorov $n$-widths of general sets that are given as the union of two or more (not necessarily disjoint) sets. 
Moreover, although we use the same notation used before for orthogonality, it should be noted that \Cref{prop:lower_bound} and \Cref{prop:upper_bound} work also for Kolmogorov $n$-widths in Banach spaces because we only use distance-based arguments and no dot products.

The following \Cref{prop:lower_bound} shows that it is possible to lower bound the Kolmogorov $n$-width $d_n(\Lambda)$ of $\Lambda = \bigcup_{j=1}^M \Lambda_j$ with help of the Kolmogorov $n$-widths $d_n(\Lambda_j), j=1,\dots, M$.

\begin{prop}
\label{prop:lower_bound}
Let $\Lambda := \bigcup_{j=1}^M \Lambda_j \subset \mathcal{H}'$. Then it holds
\begin{align*}
d_n(\Lambda) \geq \max_{j=1, \dots, M} d_n(\Lambda_j).
\end{align*}
\end{prop}
\begin{proof}
For any $j=1, \dots, M$ we obtain
\begin{align*}
d_n(\Lambda) &\equiv \inf_{\substack{G_n \subset \calh' \\ \dim(G_n) = n}} \sup_{\mu \in \Lambda} \dist(\mu, G_n)_{\calh'} \\
&\geq \inf_{\substack{G_n \subset \calh' \\ \dim(G_n) = n}} \sup_{\mu \in \Lambda_j} \dist(\mu, G_n)_{\calh'} = d_n(\Lambda_j).
\end{align*}
Therefore the statement directly follows.
\end{proof}

In a similar way it is possible to derive an upper bound on $d_n(\Lambda)$ via the quantities $d_n(\Lambda_j), j=1, \dots, M$.
The proof can be found in \Cref{sec:proofs0}.
\begin{prop}
\label{prop:upper_bound}
Let $\Lambda := \bigcup_{j=1}^M \Lambda_j \subset \mathcal{H}'$. Then it holds
\begin{align*}
d_n(\Lambda) \leq \min_{\sum_{j=1}^M n_j \leq n} \max_{j=1, \dots, M} d_{n_j}(\Lambda_j).
\end{align*}
\end{prop}

\Cref{prop:lower_bound} and \Cref{prop:upper_bound} enable us to analyze the asymptotics of $d_n(\Lambda)$ with help of the knowledge of $d_n(\Lambda_j)$, $1 \leq j \leq M$. 
This will be applied in the following subsection using $M=2$ and $\Lambda_L, \Lambda_B$. %

\subsection{Kernel setting: Bounds on $d_n(\Lambda)$}
\label{subsec:application_sobolev}

We start by analyzing the Kolmogorov $n$-widths $d_n(\Lambda_L)$ and $d_n(\Lambda_B)$ as defined in Eq.\ \eqref{eq:lambda_sets}.
To the best of our knowledge there are no similar estimates on $d_n(\Lambda_L)$ and $d_n(\Lambda_B)$ available in the literature. 
For $d_n(\Lambda_L)$ we obtain the following statement, where the proof
can be found in the Appendix.
\begin{theorem}[Upper bound on $d_n(\Lambda_L)$]
  \label{th:kolmogorov_estimate_Lambda1}
Let $L$ be a linear differential operator according to \hyperref[ass:assumptions]{Assumption 1}, 
$\Omega \subset \R^d$ a bounded Lipschitz domain. 
Let $k$ be a kernel s.t.\ $\ns \asymp H^\tau(\Omega)$ with $\tau > 2 + d/2$. 
Then for $\Lambda_L$ as defined in Eq.\ \eqref{eq:lambda_sets}
and all $n\geq 1$ it holds
\begin{align*}
  d_n(\Lambda_L, \ns') \leq C_L n^{\frac{1}{2} - \frac{\tau-2}{d}},
\end{align*}
where $C_L>0$ is a constant depending on $L$ but not on $n$.
\end{theorem}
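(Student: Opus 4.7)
The plan is to bound $d_n(\Lambda_L, \ns')$ by exhibiting an explicit $n$-dimensional subspace built from generalized kernel translates at well-distributed collocation points. First I would move to the primal formulation of Eq.\ \eqref{eq:kolmogorov_dual} and recall that the Riesz representer of $\delta_x \circ L$ is $L^{(2)} k(\cdot, x)$, so that $d_n(\Lambda_L, \ns')$ is the best worst-case $\ns$-approximation of the set $\{L^{(2)} k(\cdot, x) : x \in \Omega\}$ by $n$-dimensional subspaces.

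Since $\Omega$ is bounded I can choose asymptotically quasi-uniform points $X_n = \{x_1,\dots,x_n\} \subset \Omega$ with fill distance $h_n \asymp n^{-1/d}$, and take as trial subspace $H_n := \Sp\{L^{(2)} k(\cdot, x_j) : j=1,\dots,n\}$. By the generalized interpolation setup of Subsection \ref{subsubsec:generalized_kernerl_interpolation}, the $\ns$-orthogonal projection $s_n := \Pi_{H_n}(u)$ is precisely the minimum-norm generalized interpolant of $u$ with respect to the functionals $\delta_{x_j} \circ L$, so that $L s_n(x_j) = L u(x_j)$ for $j=1,\dots,n$. Hence $w := Lu - Ls_n$ vanishes on $X_n$.

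The core estimate then follows from Theorem \ref{th:estimate_derivatives_wendland} applied to $w$ with parameters $p=2$, $q=\infty$, $m=0$ and smoothness $\tau - m > d/2$ (guaranteed by the hypothesis $\tau > m + d/2$): since $w|_{X_n}=0$,
\begin{align*}
\|w\|_{L^\infty(\Omega)} \leq C\, h_n^{\tau-m-d/2}\, \|w\|_{H^{\tau-m}(\Omega)}.
\end{align*}
Smoothness of the coefficients of $L$ ensures that $L : H^\tau(\Omega) \to H^{\tau-m}(\Omega)$ is continuous, so combined with the norm equivalence $\ns \asymp H^\tau(\Omega)$ and the projection bound $\|u - s_n\|_{\ns} \leq \|u\|_{\ns}$ this yields $\|w\|_{L^\infty(\Omega)} \leq C_L\, h_n^{\tau-m-d/2}\, \|u\|_{\ns}$. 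Taking the supremum over $u \in \ns$ and using the dual characterization of the generalized power function from Eq.\ \eqref{eq:power_func_generalized} gives $\sup_{x\in\Omega} P_{\Lambda_n}(\delta_x \circ L) \leq C_L\, h_n^{\tau-m-d/2}$, which is an upper bound on $d_n(\Lambda_L,\ns')$; substituting $h_n \asymp n^{-1/d}$ produces the claimed rate $n^{1/2 - (\tau-m)/d}$.

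The main obstacle I anticipate is bookkeeping the regularity hypotheses so that every step is legitimate: one needs $\tau - m > d/2$ so that $L u - L s_n$ lies in a Sobolev space to which the sampling inequality applies, and smooth enough coefficients of $L$ for the continuity $L : H^\tau \to H^{\tau-m}$. A secondary subtlety is that Theorem \ref{th:estimate_derivatives_wendland} is formally stated under an interior cone condition and a Lipschitz boundary, while the present theorem only assumes $\Omega$ bounded; this is resolved either by tacitly adding those mild geometric assumptions on $\Omega$ or by invoking the boundary-shape-independent RKHS sampling result of \cite{wenzel2022stability}, which produces the same asymptotic rate.
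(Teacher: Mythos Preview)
Your proposal is correct and follows essentially the same route as the paper: restrict to subspaces spanned by Riesz representers $L^{(2)}k(\cdot,x_j)$ at quasi-uniform points, observe that $Lu - L\Pi_{H_n}(u)$ vanishes on $X_n$ and lies in $H^{\tau-m}(\Omega)$, apply the sampling inequality of Theorem~\ref{th:estimate_derivatives_wendland}, and close with the continuity of $L:H^\tau\to H^{\tau-m}$ together with the projection estimate $\|u-\Pi_{H_n}(u)\|_{\ns}\leq\|u\|_{\ns}$. You have also correctly flagged the mismatch between the bare ``bounded domain'' hypothesis and the interior-cone/Lipschitz assumptions needed for Theorem~\ref{th:estimate_derivatives_wendland}, which the paper leaves implicit.
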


Similarly we can state an upper bound of the widths
of the set of the boundary functionals. The proof again is postponed
to the Appendix.

\begin{theorem}[Upper bound on $d_n(\Lambda_B)$]
\label{th:kolmogorov_estimate_Lambda2}
Let $\Omega \subset \R^d$ be a bounded Lipschitz domain with piecewise smooth boundary $\partial \Omega$ of dimension $d-1$.
Let $k$ be a kernel s.t.\ $\ns \asymp H^\tau(\Omega)$ with $\tau > d/2$.
Then for $\Lambda_B$ as defined in Eq.\ \eqref{eq:lambda_sets}
and all $n\geq 1$ it holds
\begin{align*}
  d_n(\Lambda_B) \leq C_B n^{\frac{1}{2} - \frac{\tau - 1/2}{d-1}},
\end{align*}
where $C_B>0$ is a constant independent of $n$.
\end{theorem}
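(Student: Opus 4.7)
The proof plan mirrors the structure of the proof of Theorem \ref{th:kolmogorov_estimate_Lambda1}, but with the crucial change that the sampling inequality now has to be applied on the $(d-1)$-dimensional manifold $\partial \Omega$ rather than on the $d$-dimensional domain $\Omega$. I will start from the same chain of (in)equalities as in Eq.\ \eqref{eq:kolmogorov_proof}: by Riesz isometry and the generalized power function representation I can write
\begin{align*}
d_n(\Lambda_B, \ns')
\leq \inf_{\substack{H_n = \Sp\{v_{\lambda_1},\dots,v_{\lambda_n}\} \\ \lambda_1,\dots,\lambda_n \in \Lambda_B}} \sup_{0 \neq u \in \ns} \frac{\Vert u - \Pi_{H_n}(u) \Vert_{L^\infty(\partial \Omega)}}{\Vert u \Vert_{\ns}},
\end{align*}
where I used $\lambda = \delta_x$ with $x \in \partial \Omega$ so that the $\sup_{\lambda \in \Lambda_B}$ becomes a $\sup_{x \in \partial \Omega}$, i.e.\ an $L^\infty(\partial \Omega)$-norm.

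Next I would restrict the infimum to $H_n$ generated by point evaluations at $n$ asymptotically quasi-uniform points $X_n = \{x_1, \dots, x_n\} \subset \partial \Omega$. Since $\partial \Omega$ is smooth of dimension $d-1$, such point sets exist with fill distance $h_{X_n, \partial\Omega} \leq c \, n^{-1/(d-1)}$. For these $H_n$ the residual $r := u - \Pi_{H_n}(u)$ vanishes on $X_n$, so does its trace $r|_{\partial\Omega}$.

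The key step is then to invoke the manifold sampling inequality of Theorem \ref{th:sampling_inequ_manifolds} on $\mathbb{M} = \partial \Omega$ with $\tilde d = d-1$, $p=2$, $q=\infty$, $\mu = 0$, and $t = \tau - 1/2$. Note that $u \in \ns \asymp H^\tau(\Omega)$ implies $u|_{\partial \Omega} \in H^{\tau - 1/2}(\partial \Omega)$ by the trace theorem, so the smoothness parameter $t = \tau - 1/2$ is the correct one, and the hypothesis $t > \tilde d/2$ is equivalent to $\tau > d/2$ which is given. This yields
\begin{align*}
\Vert r \Vert_{L^\infty(\partial \Omega)}
\leq C \, h_{X_n,\partial\Omega}^{\tau - 1/2 - (d-1)/2}\, |r|_{H^{\tau-1/2}(\partial\Omega)}
\leq C' n^{\frac{1}{2} - \frac{\tau-1/2}{d-1}} \Vert r|_{\partial\Omega} \Vert_{H^{\tau-1/2}(\partial\Omega)},
\end{align*}
where I used $(\tau - 1/2 - (d-1)/2)/(d-1) = (\tau-1/2)/(d-1) - 1/2$ and $h_{X_n,\partial\Omega} \leq c\,n^{-1/(d-1)}$.

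To close the argument, I would use the trace inequality $\Vert v|_{\partial\Omega}\Vert_{H^{\tau-1/2}(\partial\Omega)} \leq C \Vert v \Vert_{H^\tau(\Omega)}$, norm equivalence $H^\tau(\Omega) \asymp \ns$, and the fact that $\Pi_{H_n}$ is an $\ns$-orthogonal projection (hence $\Vert r \Vert_{\ns} \leq \Vert u \Vert_{\ns}$), to bound the numerator by $\Vert u \Vert_{\ns}$ and conclude $d_n(\Lambda_B, \ns') \leq C_B n^{1/2 - (\tau-1/2)/(d-1)}$. The main technical obstacles I anticipate are: (i) verifying that one may indeed restrict the infimum to $H_n$ spanned by Riesz representers of $\Lambda_B$-functionals without losing the sharp rate (this is exactly the strategy used successfully for $\Lambda_L$); and (ii) justifying the existence of quasi-uniform sample sets on $\partial\Omega$ with the claimed fill-distance decay, which is standard for smooth compact manifolds. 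The loss of $1/2$ derivative through the trace is the reason the boundary exponent features $\tau - 1/2$ instead of $\tau$, in complete analogy with the $\tau - m$ appearing in Theorem \ref{th:kolmogorov_estimate_Lambda1}.
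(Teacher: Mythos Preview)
Your proposal is correct and follows essentially the same approach as the paper: restrict the infimum to spaces spanned by Riesz representers of $\Lambda_B$-functionals at asymptotically uniform boundary points, apply the manifold sampling inequality (Theorem \ref{th:sampling_inequ_manifolds}) on $\partial\Omega$ with $t=\tau-1/2$, and close via the trace embedding $H^\tau(\Omega)\hookrightarrow H^{\tau-1/2}(\partial\Omega)$ together with the projection bound $\Vert u-\Pi_{H_n}u\Vert_{\ns}\leq\Vert u\Vert_{\ns}$. Your handling of the trace step is in fact slightly cleaner than the paper's presentation.
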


We remark that $\frac{1}{2} - \frac{\tau-1/2}{d-1} < \frac{1}{2} - \frac{\tau-2}{d} < 0$, i.e.\ the decay in \Cref{th:kolmogorov_estimate_Lambda2} is faster than the decay in \Cref{th:kolmogorov_estimate_Lambda1}.

We can now
leverage the abstract results from \Cref{subsec:lower_upper_bound_kolmogorov} and combine them with the concrete estimates for the kernel setting
from \Cref{th:kolmogorov_estimate_Lambda1}
and \Cref{th:kolmogorov_estimate_Lambda2}.
The basic idea is to carefully balance the number of points which are spent on the interior and on the boundary of the domain.
Like this it is possible to derive an upper bound on the Kolmogorov $n$-width $d_n(\Lambda)$.

\begin{theorem}[Upper bound on $d_n(\Lambda)$]
\label{th:kolmogorov_estimate_Lambda}
Let $\Omega \subset \R^d$ be a bounded Lipschitz domain with piecewise
smooth boundary $\partial \Omega$ of dimension $d-1$.
Let $L$ be a linear differential operator according to \hyperref[ass:assumptions]{Assumption 1}. %
Let $k$ be a kernel s.t.\ $\ns \asymp H^\tau(\Omega)$ with $\tau > 2 + d/2$.
Then the Kolmogorov $n$-widths of the collocation functionals from
\eqref{eq:pde} %
satisfy for all $n\geq 1$
\begin{align*}
  d_n(\Lambda) \leq C n^{\frac{1}{2} - \frac{\tau - 2}{d}},
\end{align*}
where $C>0$ depends on $\tau, d, L$ but not on $n$.
\end{theorem}

Applied to the PDE setting of \cite{schaback2019greedy} this means that
the Kolmogorov $n$-widths related to the collocation functionals
of the PDE $Lu = f$ (here given by $d_n(\Lambda_L) \asymp n^{\frac{1}{2} - \frac{\tau - 2}{d}}$) decide about the
Kolmogorov $n$-widths for the whole boundary value problem
(here $d_n(\Lambda)$), as it was already conjectured
in \cite{schaback2019greedy}.
The additional boundary conditions do not influence the decay of the
Kolmogorov $n$-widths, because the Kolmogorov $n$-widths related to the
boundary conditions 
decay faster. \\

The proposed analysis in this section is in principle not restricted to only two sets of functionals $\Lambda_L$ and $\Lambda_B$. This is explained in the 
following remark.

\begin{rem}
We remark that the results of this section can be extended to more sets of functionals. %
This was already done for the abstract setting in \Cref{subsec:lower_upper_bound_kolmogorov} by considering $\Lambda = \bigcup_{j=1}^M \Lambda_j$.
In the PDE collocation setting this refers e.g.\ to more types of
boundary conditions, i.e.\ we can incorporate both Dirichlet
boundary conditions and Neumann boundary conditions by using
$\Lambda_{B_1}$ (for the Dirichlet-related functionals) and
$\Lambda_{B_2}$ (for the Neumann-related functionals) instead of
only using $\Lambda_B$.
\end{rem}

\section{%
  Analysis of PDE-greedy schemes} \label{sec:analysis_fgreedy}
 
We start by defining the notion of a PDE-greedy kernel algorithm.
\begin{definition} 
Consider a BVP \eqref{eq:pde}, 
a kernel $k$ and selection criteria
$\{\eta_j\}_{j=1}^\infty$ with $\eta_j:\Lambda \to [0,\infty)$ for all $j\in\N$, and with $\Lambda$ defined in
Eq.\ \eqref{eq:lambda_sets}. 
An algorithm is called PDE-greedy kernel algorithm
if it selects $\lambda_{n+1} \in \Lambda$ according to 
\begin{align*}
\lambda_{n+1} = \argmax_{\lambda \in \Lambda} \eta_n(\lambda).
\end{align*}
\end{definition}
A whole class of PDE-greedy algorithms with specific selection criteria $\eta_n$ is given in \Cref{def:beta_greedy_algorithms}.
For the moment we continue with this general notion in order to stress the fact that the results of the following \Cref{sec:41} hold for general selection criteria.

For this general class of algorithms
some basic analytical bounds on geometric means of power
function evaluations can be provided, as presented in \Cref{sec:41}. 
Then in \Cref{sec:42} we define a scale of $\beta$-greedy
procedures, which are favourable
in the sense that these allow more refined analysis that is presented in
\Cref{sec:43}. Those results will serve for obtaining
the main convergence rate statements in \Cref{sec:conv_rates_pde_greedy}.

\subsection{Decay of power function quantities}
\label{sec:41}

In \cite[Section 3]{wenzel2022analysis} the abstract analysis of greedy algorithms in Hilbert spaces of \cite{devore2013greedy} was extended by considering a broader class of greedy algorithms. 
Here we will make use of those results and apply them to obtain bounds on 
\begin{align}
\label{eq:geom_mean_powerfunc}
\left[ \prod_{i=n+1}^{2n} P_{\Lambda_i}(\lambda_{i+1}) \right]^{1/n},
\end{align}
see \Cref{cor:decay_powerfunc_quantity},
with $\Lambda_i \equiv \{\lambda_1, \dots, \lambda_i\}$ as introduced in \Cref{subsec:generalized_kernel_interpolation}.
This will later enable us to derive convergence rates for a range of target-data dependent PDE-greedy kernel algorithms.

For this we briefly recall the notation of \cite[Section 3]{wenzel2022analysis}:
Let $\mathcal{H}$ be a Hilbert space with norm $\Vert \cdot \Vert := \Vert \cdot \Vert_\mathcal{H}$ and $\mathcal{F} \subset \mathcal{H}$ a compact subset. 
Without loss of generality we assume that $\Vert f \Vert \leq 1$ for all $f \in \mathcal{F}$.
We consider algorithms that select elements $f_0, f_1, \ldots \in \mathcal{F}$, without yet specifying any particular selection criterion. We define $V_n :=
\text{span}\{f_0, \dots, f_{n-1}\}$ and the following quantities, where $H_n$ is any $n$-dimensional subspace of $\mathcal{H}$.
\begin{align} 
\begin{aligned} \label{eq:quantities}
d_n :=& d_n(\mathcal{F})_\mathcal{H} := \inf_{H_n \subset \mathcal{H}} \sup_{f \in \mathcal{F}} \mathrm{dist}(f, H_n)_\mathcal{H}, \\
\sigma_n :=& \sigma_n(\mathcal{F})_\mathcal{H} := \sup_{f \in \mathcal{F}} \mathrm{dist}(f, V_n)_\mathcal{H},\\
\nu_n :=& \text{dist}(f_n, V_n)_\mathcal{H}.
\end{aligned}
\end{align}

Now we recall \cite[Corollary 2]{wenzel2022analysis} including its assumptions which were stated before in \cite[Theorem 1]{wenzel2022analysis}):

\begin{cor}[Corollary 2 from \cite{wenzel2022analysis}] \label{cor:decay_abstract_setting_prod} 
Consider a compact set $\mathcal{F}$ in a Hilbert space $\mathcal{H}$, and a greedy algorithm that selects elements from $\mathcal F$ according to any arbitrary 
selection rule.
\begin{enumerate}[label=\roman*)]
\item If $d_n(\mathcal{F}) \leq C_0 n^{-\alpha}, n\geq1$, 
then it holds
\begin{align*} %
\left(\prod_{i=n+1}^{2 n} \nu_{i} \right)^{1/n} 
&\leq2^{\alpha+1/2} \tilde{C}_0 e^\alpha  \log(n)^\alpha n^{-\alpha}, \;\;n\geq 3,
\end{align*}
with $\tilde C_0 := \max\{1, C_0\}$.

\item If $d_n(\mathcal{F}) \leq C_0 e^{-c_0 n^\alpha}, n\geq 1$, then it holds
\begin{align*} %
\left( \prod_{i=n+1}^{2n} \nu_i \right)^{1/n} \leq \sqrt{2 \tilde{C}_0} \cdot e^{-c_1 n^\alpha}, \;\;n\geq 2,
\end{align*}
with $\tilde{C}_0 := \max\{1, C_0\}$ and $c_1 = 2^{-(2+\alpha)}c_0 < c_0$.
\end{enumerate}
\end{cor}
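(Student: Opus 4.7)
The plan is to invoke the fundamental inequality given as Theorem 1 in \cite{wenzel2021analysis} (the "parent" result that this corollary is stated to depend on), and then substitute the specific decay rates of $d_n(\mathcal{F})$. This parent inequality is a volume/determinant-type bound in the spirit of DeVore-Petrova-Wojtaszczyk: by running Gram-Schmidt on the greedily chosen $f_0, \dots, f_{2n-1}$, the orthogonal heights are exactly the $\nu_i$, so $\prod_{i=n+1}^{2n}\nu_i$ is the volume of a parallelepiped in an $n$-dimensional subspace. Comparing this volume against one obtained from any optimal $n$-dimensional subspace yields an estimate of the form
\begin{align*}
\left(\prod_{i=n+1}^{2n}\nu_i^2\right) \leq A_n \cdot \prod_{i=1}^{n} d_i^{2},
\end{align*}
where $A_n$ is a combinatorial factor (of order at most $\binom{2n}{n} \leq 4^n$). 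Crucially, this bound holds for \emph{any} selection rule whatsoever, which is exactly what Corollary \ref{cor:decay_abstract_setting_prod} requires.

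For part (i), I would substitute $d_i \leq C_0 i^{-\alpha}$ into the product on the right, obtaining $\prod_{i=1}^{n} d_i^{2} \leq C_0^{2n} (n!)^{-2\alpha}$. Taking the $2n$-th root and invoking the Stirling lower bound $n! \geq (n/e)^{n}$ converts $(n!)^{-\alpha/n}$ into the desired $e^\alpha n^{-\alpha}$ factor. The combinatorial prefactor $A_n^{1/(2n)}$ contributes at most a constant like $2$, which accounts for the $2^{\alpha+1/2}$ in the statement. The extra $\log(n)^\alpha$ factor is the one subtle point: it does not come out for free from the raw product bound but arises from refining the argument through a careful re-indexing (summing instead of multiplying in log-scale and bounding the sum $\sum_{i=1}^n i^{-1}$ by $\log n$) or from the sharper form of the parent inequality that replaces $\prod d_i^2$ by a mixed quantity involving an additional $\sigma_{n+1}$-type term.

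For part (ii), I would plug $d_i \leq C_0 e^{-c_0 i^\alpha}$ into the same product, giving
\begin{align*}
\prod_{i=1}^{n} d_i^2 \leq C_0^{2n} \exp\!\left(-2 c_0 \sum_{i=1}^{n} i^{\alpha}\right).
\end{align*}
Restricting the sum to $i \geq n/2$ yields $\sum_{i=1}^n i^\alpha \geq (n/2) \cdot (n/2)^\alpha = 2^{-(\alpha+1)} n^{\alpha+1}$, so the $2n$-th root produces $\exp(-2^{-(\alpha+2)} c_0 n^\alpha)$, matching the stated $c_1 = 2^{-(2+\alpha)} c_0$. The $\sqrt{2\tilde C_0}$ prefactor is absorbed from $C_0$ together with the $A_n^{1/(2n)}$ contribution. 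The main obstacle is honestly locating the parent inequality in precisely the right form; once it is in hand, both cases reduce to substitution plus Stirling, and the only delicate estimate is explaining how the logarithm enters in the polynomial case without loss in the leading constant.
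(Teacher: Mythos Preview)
The paper does not prove this corollary; it is quoted from \cite{wenzel2021analysis} as a background result, so there is no in-paper proof to compare against. Your overall architecture---invoke the parent Theorem~1 of \cite{wenzel2021analysis} and then substitute the assumed decay of $d_n$---is the intended one, and your computation for part (ii) is clean: bounding $\sum_{i=1}^n i^\alpha \ge (n/2)(n/2)^\alpha$ recovers precisely $c_1=2^{-(2+\alpha)}c_0$.

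For part (i), however, there is a genuine gap. The parent inequality you posit, $\prod_{i=n+1}^{2n}\nu_i^2 \le A_n \prod_{i=1}^n d_i^2$ with $A_n\le 4^n$, combined with Stirling, yields $\bigl(\prod_{i=n+1}^{2n}\nu_i\bigr)^{1/n}\le 2\,C_0\,e^\alpha n^{-\alpha}$ \emph{without} any logarithm---strictly stronger than the stated bound. Since the authors themselves remark later that they cannot remove the $\log(n)^\alpha$ factor despite believing it spurious, your simpler route cannot be available as written; the parent inequality must have a different shape. In the DeVore--Petrova--Wojtaszczyk framework (and in \cite{wenzel2021analysis}) the relevant bound carries a free integer parameter $m$ and a combinatorial prefactor of the type $(N/m)^m$ multiplied against $d_m^{2m}$, and the logarithm enters when one \emph{optimizes} over $m$: balancing the growth of $(N/m)^m$ against the decay of $d_m^{2m}$ forces $m$ of order $n/\log n$. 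Your remark that the log ``arises from refining the argument'' gestures at this but does not identify the optimization step, which is the actual mechanism. Without it, your sketch for (i) either proves too much or is using an inequality that is not the one available.
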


We can leverage this abstract result to obtain convergence statements on the quantity of Eq.\ \eqref{eq:geom_mean_powerfunc} by generalizing a convenient link between the abstract setting and the kernel setting, which was first of all established in \cite{santin2017convergence} for function interpolation.
We generalize this %
by %
choosing
$\mathcal{H} := \ns$, $\mathcal{F} := \{ v_{\lambda} ~ | ~ \lambda \in \Lambda_L \cup \Lambda_B \}$ and thus $f_0 :=
v_{\lambda_1}, f_1 := v_{\lambda_2}$, etc. %
Hereby, we have $V_n = \Sp \{ v_{\lambda_i} ~ | ~ i = 1, ..., n \}$.

These choices allow us to formulate the abstract definitions of Eq.\ \eqref{eq:quantities} in the kernel setting for PDE approximation. 
For this we point again to the definition of the generalized power function from Eq.\ \eqref{eq:power_func_generalized}: %
The quantity $\sigma_n$ is then simply the maximal value $\sup_{\lambda \in \Lambda} P_{\Lambda_n}(\lambda)$ of the generalized power function $P_{\Lambda_n}$, while $\nu_n$ is the value of the generalized power function $P_{\Lambda_n}$ at the next selected functional $\lambda_{n+1}$ corresponding to the point $x_{\lambda_{n+1}}$:
\begin{align}\label{eq:connection_1}
\sigma_n 
& \equiv \sup_{f \in \mathcal{F}} \mathrm{dist}(f, V_n)_\calh
= \sup_{f \in \mathcal{F}} \Vert f - \Pi_{V_n}(f) \Vert_{\mathcal{H}} \nonumber\\
&~= \sup_{\lambda \in \Lambda_L \cup \Lambda_B} \Vert v_\lambda - \Pi_{V_n}(v_\lambda) \Vert_{\calh} 
= \sup_{\lambda \in \Lambda_L \cup \Lambda_B} P_{\Lambda_n}(\lambda),\\
\nu_n &\equiv \mathrm{dist}(f_n, V_n)_\mathcal{H} 
= \Vert f_n - \Pi_{V_n}(f_n) \Vert_{\mathcal{H}} \nonumber\\
&= \Vert v_{\lambda_{n+1}} - \Pi_{V_n}(v_{\lambda_{n+1}}) \Vert_{\mathcal{H}} = P_{\Lambda_n}(\lambda_{n+1})\nonumber.
\end{align}
Furthermore, $d_n$ can be similarly bounded as
\begin{align}\label{eq:connection_2}
d_n &\equiv \inf_{H_n \subset \mathcal{H}} \sup_{f \in \mathcal{F}} \mathrm{dist}(f, H_n)_\mathcal{H}  = \inf_{H_n 
\subset \mathcal{H}} \sup_{f \in \mathcal{F}} \Vert f - \Pi_{H_n}(f) \Vert_\mathcal{H}\\
&\leq \inf_{H_n \subset \Sp \{ \mathcal{F} \}} \sup_{f \in \mathcal{F}} \Vert f - \Pi_{H_n}(f) \Vert_{\mathcal{H}} \nonumber \\
&= \inf_{H_n \subset \Sp \{ \mathcal{F} \}} \sup_{\lambda \in \Lambda_L \cup \Lambda_B} \Vert v_\lambda - \Pi_{H_n}(v_\lambda) \Vert_{\mathcal{H}} 
= \inf_{\Lambda_n \subset \Lambda_L \cup \Lambda_B} \sup_{\lambda \in \Lambda_L \cup \Lambda_B} P_{\Lambda_n}(\lambda) 
\nonumber.
\end{align}
Here $H_n$ always indicates an $n$-dimensional subspace.
Due to Eq.\ \eqref{eq:connection_2}, any bound on $\sup_{\lambda \in \Lambda_L \cup \Lambda_B} P_{\Lambda_n}(\lambda)$ for a given set of functionals $\Lambda_n \subset \F$ (which corresponds to a distribution of points in $\Omega$ and on the boundary $\partial \Omega$) gives a bound on $d_n$. 

Additionally, observe that the assumption $\|f\|_\calh \leq 1$ for $f\in\mathcal F$ from the abstract setting is satisfied using the kernel setting as soon as
\begin{align}
\label{eq:pf_normalization}
  \sup_{\lambda \in \Lambda_L \cup \Lambda_B} P_0(\lambda) = & \sup_{\lambda \in \Lambda_L \cup \Lambda_B} \Vert v_\lambda \Vert_{\mathcal{H}} \notag
  \\
= &\max \left(\sup_{\lambda \in \Lambda_L} \Vert v_\lambda \Vert_{\mathcal{H}}, \sup_{\lambda \in \Lambda_B} \Vert v_\lambda \Vert_{\mathcal{H}} \right) \notag \\
= &\max \left( \sup_{x \in \Omega} \sqrt{L^{(1)}L^{(2)} k(x, x)}, \sup_{x \in \partial \Omega} \sqrt{k(x, x)} \right)
\leq 1,
\end{align}
where we use the explicit representations of the Riesz representers
  $v_\lambda = k(x,\cdot)$ if $\lambda=\delta_{x} \in \Lambda_B$, and $v_{\lambda} = L^{(1)} k(x,\cdot)$ if
   $\lambda = \delta_{x} \circ L \in \Lambda_L$. 
The validity of Eq.\ \eqref{eq:pf_normalization}
can always be ensured by rescaling the kernel $k$ to $\epsilon k$, $0 < \epsilon < 1$ (which does not change the RKHS, but only rescales its norm).
Applying the %
link to \Cref{cor:decay_abstract_setting_prod} immediately gives the following result:

\begin{cor} \label{cor:decay_powerfunc_quantity} 
Consider the set $\mathcal{F} = \{ v_{\lambda} ~ | ~ \lambda \in \Lambda_L \cup \Lambda_B \} \subset \ns$,
and a greedy algorithm that selects elements from $\mathcal F$ according to any arbitrary selection rule.
\begin{enumerate}[label=\roman*)]
\item If $d_n(\mathcal{F}) \leq C_0 n^{-\alpha}, n \geq 1$, then it holds
\begin{align*} %
\left(\prod_{i=n+1}^{2 n} P_{\Lambda_i}(\lambda_{i+1}) \right)^{1/n} 
&\leq2^{\alpha+1/2} \tilde{C}_0 e^\alpha  \log(n)^\alpha n^{-\alpha}, \;\;n\geq 3,
\end{align*}
with $\tilde C_0 := \max\{1, C_0\}$.
\item If $d_n(\mathcal{F}) \leq C_0 e^{-c_0 n^\alpha}, n \geq 1$, then it holds
\begin{align*} %
\left( \prod_{i=n+1}^{2n} P_{\Lambda_i}(\lambda_{i+1}) \right)^{1/n} \leq \sqrt{2 \tilde{C}_0} \cdot e^{-c_1 n^\alpha}, \;\;n\geq 2,
\end{align*}
with $\tilde{C}_0 := \max\{1, C_0\}$ and $c_1 = 2^{-(2+\alpha)}c_0 < c_0$.
\end{enumerate}
\end{cor}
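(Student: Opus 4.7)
The plan is to translate the statement into the abstract Hilbert-space setting of Corollary~\ref{cor:decay_abstract_setting_prod} and invoke it directly, using the dictionary set up in Eqs.\ \eqref{eq:connection_1}--\eqref{eq:pf_normalization}. Concretely, I would set $\calh := \ns$ and identify the compact set $\mathcal F$ of the abstract setting with the set of Riesz representers $\{v_\lambda : \lambda \in \Lambda_L \cup \Lambda_B\}$. A greedy algorithm that selects functionals $\lambda_1,\lambda_2,\dots \in \Lambda$ then corresponds, under the identification $f_i \leftrightarrow v_{\lambda_{i+1}}$, to a greedy algorithm in the abstract sense that selects elements from $\mathcal F$, with $V_n = \Sp\{v_{\lambda_1},\dots,v_{\lambda_n}\}$ as in Subsection~\ref{subsubsec:generalized_kernerl_interpolation}.

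Next I would verify that all three quantities in Eq.\ \eqref{eq:quantities} translate correctly to their generalized power-function counterparts. By Eq.\ \eqref{eq:connection_1}, the abstract $\nu_n$ is exactly $P_{\Lambda_n}(\lambda_{n+1})$, so the product $\prod_{i=n+1}^{2n}\nu_i$ appearing in Corollary~\ref{cor:decay_abstract_setting_prod} coincides with the product in the statement. Furthermore, by Eq.\ \eqref{eq:connection_2}, the abstract Kolmogorov width $d_n(\mathcal F)_\calh$ is bounded above by the same quantity whose decay is hypothesized in both cases (i); and (ii); so the hypotheses of Corollary~\ref{cor:decay_abstract_setting_prod} are met with the same constants $C_0$, $c_0$ and exponent $\alpha$.

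The only subtlety, which I regard as the main (though minor) obstacle, is the normalization assumption $\|f\|_\calh \leq 1$ for $f \in \mathcal F$ required by the abstract setting. This is handled by Eq.\ \eqref{eq:pf_normalization}: after rescaling $k \mapsto \varepsilon k$ with $\varepsilon>0$ small enough, one has $\sup_{\lambda \in \Lambda_L \cup \Lambda_B} \|v_\lambda\|_{\ns} \leq 1$, which does not alter the RKHS as a set of functions but only rescales its norm. Since both sides of the desired inequalities scale homogeneously in the kernel (the left-hand side involves $n$-th roots of a product of $n$ power-function values, and the right-hand side's constant $C_0$ bounds the same rescaled Kolmogorov width), the final inequalities are invariant under this rescaling and therefore hold without loss of generality.

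With these identifications in place, parts (i); and (ii); of the conclusion follow by applying parts (i); and (ii); of Corollary~\ref{cor:decay_abstract_setting_prod}, respectively, to the chosen $\mathcal H$ and $\mathcal F$, and reading off $\prod_{i=n+1}^{2n} P_{\Lambda_i}(\lambda_{i+1})$ in place of $\prod_{i=n+1}^{2n}\nu_i$. No further argument is needed beyond the translation, which is why this corollary is essentially a direct consequence of the general framework of~\cite{wenzel2021analysis} combined with the power-function interpretation for generalized interpolation.
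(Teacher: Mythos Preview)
Your proposal is correct and follows essentially the same approach as the paper: the paper's proof simply states that the result is a direct consequence of Corollary~\ref{cor:decay_abstract_setting_prod} under the identifications $\mathcal H=\ns$, $\mathcal F=\{v_\lambda\}$, $f_i=v_{\lambda_{i+1}}$ summarized in Eq.~\eqref{eq:connection_1}. You actually supply more detail than the paper does, in particular the explicit check that $\nu_n=P_{\Lambda_n}(\lambda_{n+1})$ and the discussion of the normalization via Eq.~\eqref{eq:pf_normalization}.
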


\begin{proof}
This is a direct consequence of \Cref{cor:decay_abstract_setting_prod} using the choices of $\mathcal{H}, \mathcal{F}$ and $f_0, f_1, \ldots, f_i$ as introduced before and summarized in
Eq.~\eqref{eq:connection_1}.
\end{proof}

The assumptions of \Cref{cor:decay_powerfunc_quantity} about the decay of the Kolmogorov widths $d_n(\mathcal{F})$ can be ensured via \Cref{th:kolmogorov_estimate_Lambda} because of the choice $\mathcal{F} \equiv \{ v_\lambda, \lambda \in \Lambda \}$, and the Riesz representer isometry, i.e.
\begin{align*}
d_n(\Lambda) \equiv d_n(\Lambda, \ns') = d_n(\mathcal{F}, \ns) \equiv d_n(\mathcal{F}).
\end{align*}

We note that the results of this subsection simplify to the results of \cite[Section 5]{wenzel2022analysis} if we choose $L = \Id$, i.e.\ $\{ v_\lambda ~ | ~ \lambda \in \Lambda_L \cup \Lambda_B \} = \{ k(\cdot, x),\ x \in \overline{\Omega} \}$.

\subsection{Definition of PDE-$\beta$-greedy algorithms}
\label{sec:42}

We generalize the notion of $\beta$-greedy algorithms as introduced in \cite{wenzel2022analysis} for standard interpolation to PDE-$\beta$-greedy algorithms via the following definition.

\begin{definition}[Definition 4] \label{def:beta_greedy_algorithms}
A PDE-greedy kernel algorithm is called PDE-$\beta$-greedy algorithm with $\beta \in [0, \infty]$, if the next interpolation functional is chosen %
\begin{enumerate}
\item for $\beta \in [0, \infty)$ according to
\begin{align}
\label{eq:beta_greedy_selection_criterion}
\lambda_{n+1} = \argmax_{0 \neq \lambda \in \Lambda \setminus \Lambda_n} |\lambda(u-\Pi_{V_n}(u))|^\beta \cdot P_{\Lambda_n}(\lambda)^{1-\beta}, 
\end{align}
\item for $\beta = \infty$ according to 
\begin{align}
\lambda_{n+1} = \argmax_{0 \neq \lambda \in \Lambda \setminus \Lambda_n} \frac{|\lambda(u-\Pi_{V_n}(u))|}{P_{\Lambda_n}(\lambda)}.
\end{align}
\end{enumerate}
\end{definition}

This %
scale
of PDE-greedy algorithms includes several special cases, i.e.\ $\beta = 0$ (PDE-$P$-greedy), 
$\beta = 1$ (PDE-$f$-greedy) and $\beta = \infty$ (PDE-$f/P$-greedy) as visualized in \Cref{fig:number_line}.
Here we introduce the names in brackets in analogy to the existing
corresponding greedy kernel schemes for function interpolation. 

\begin{figure}[t]
\setlength\fwidth{.4\textwidth}
\begin{center}
\begin{tikzpicture}[>=latex, thick]
\draw[->] (0,0) -- (11cm,0) node [right] {$\R$};
\draw (1.5,-4pt) -- (1.5,4pt) node[below=10pt]{$0$};
\draw (1.5,5pt) node[above, align=center, fill=lightgray, rounded corners, inner sep=2pt]{PDE-$P$- \\ greedy};
\draw (4.5,-2pt) -- (4.5,2pt);
\draw (4.5, 5pt) node[above, align=center, fill=lightgray, rounded corners, inner sep=2pt]{PDE-$f$- \\greedy} node[below=10pt]{$1$};
\draw (7.5,-2pt) -- (7.5,2pt);
\draw (7.5,5pt) node[above right, align=center, rounded corners, inner sep=2pt]{$\text{limit} ~ \beta \rightarrow \infty$  \\ PDE-$f/P$-greedy} node[below=10pt]{$\beta$};
\end{tikzpicture}
\end{center}
\vspace{-.7cm}
\caption{Visualization of the scale of the PDE-$\beta$-greedy algorithms on the real line. 
  The important cases for $\beta \in \{0, %
  1\}$ and $\beta 
\rightarrow \infty$ are marked.}
\label{fig:number_line}
\end{figure}
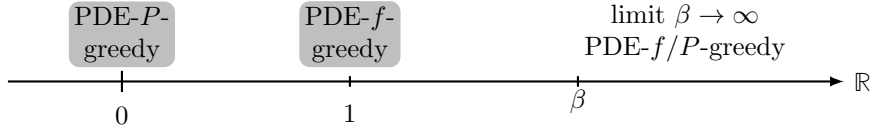

\subsection{Analysis of PDE-greedy algorithms}
\label{sec:43}

We remark that the following lemma and theorems are generalizations
of the corresponding statements within \cite{wenzel2022analysis},
when we pick $\Lambda = \{ \delta_x ~ | ~ x \in \Omega \}$ or equivalently
$\mathcal{F} = \{ k(\cdot, x) ~ | ~ x \in \Omega \}$.
Therefore the proofs
are not included in the main text, but can be found in \Cref{sec:proofs}.

\begin{lemma}[Generalization
 of Lemma 6 from \cite{wenzel2022analysis}]
\label{lem:estimate_product}
For any sequence $\{ \lambda_i \}_{i \in \N} \subset \Lambda$ and any $u \in \ns$, 
denoting the error es $e_i = u - \Pi_{V_i}(u)$, it holds for all $n \geq 1$ that
\begin{align}
\label{eq:estimate_product}
\left[ \prod_{i=n+1}^{2n} \lambda_{i+1}(e_i) \right]^{1/n} \leq n^{-1/2} \cdot \Vert e_{n+1} \Vert_{\ns} \cdot \left[ \prod_{i=n+1}^{2n} P_{\Lambda_i}(\lambda_{i+1}) \right]^{1/n}.
\end{align}
\end{lemma}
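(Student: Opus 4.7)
The plan is to factor each term $\lambda_{i+1}(r_i)$ as a product of a power function value and a Newton basis coefficient, then invoke AM-GM to relate a geometric mean of squares to an arithmetic mean, and finally identify the resulting sum as a partial sum in the Parseval-type expansion of $u$.

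More concretely, I would first write, for each $i \in \{n+1, \dots, 2n\}$,
\[
|\lambda_{i+1}(r_i)| = P_{\Lambda_i}(\lambda_{i+1}) \cdot \frac{|\lambda_{i+1}(r_i)|}{P_{\Lambda_i}(\lambda_{i+1})},
\]
so that the product on the left-hand side of Eq.\ \eqref{eq:estimate_product} factors cleanly as
\[
\left[\prod_{i=n+1}^{2n} |\lambda_{i+1}(r_i)|\right]^{1/n}
=\left[\prod_{i=n+1}^{2n} P_{\Lambda_i}(\lambda_{i+1})\right]^{1/n}
\cdot \left[\prod_{i=n+1}^{2n} \frac{|\lambda_{i+1}(r_i)|}{P_{\Lambda_i}(\lambda_{i+1})}\right]^{1/n}.
\]
The first factor matches the corresponding factor on the right-hand side, so it suffices to bound the second one by $n^{-1/2}\,\Vert u - \Pi_{V(\Lambda_{n+1})}(u)\Vert_{\ns}$.

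For this I would apply the AM-GM inequality to the squared ratios, giving
\[
\left[\prod_{i=n+1}^{2n} \left(\frac{|\lambda_{i+1}(r_i)|}{P_{\Lambda_i}(\lambda_{i+1})}\right)^{2}\right]^{1/n}
\leq \frac{1}{n}\sum_{i=n+1}^{2n} \left(\frac{|\lambda_{i+1}(r_i)|}{P_{\Lambda_i}(\lambda_{i+1})}\right)^{2}.
\]
Here the key input is Eq.\ \eqref{eq:newtown_coefficient}: the ratio $\lambda_{i+1}(r_i)/P_{\Lambda_i}(\lambda_{i+1})$ is exactly the Newton basis coefficient $\langle u, v_{i+1}\rangle_{\ns}$. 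So the sum above equals $\sum_{i=n+1}^{2n}\langle u, v_{i+1}\rangle_{\ns}^{2} = \sum_{j=n+2}^{2n+1}\langle u, v_j\rangle_{\ns}^{2}$.

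Since $\Pi_{V(\Lambda_{n+1})}(u) = \sum_{j=1}^{n+1}\langle u,v_j\rangle_{\ns}\,v_j$ (Newton basis being orthonormal) and $\{v_j\}$ is orthonormal, the generalized Parseval identity underlying Eq.\ \eqref{eq:ns_norm_via_sum} yields
\[
\sum_{j=n+2}^{2n+1}\langle u, v_j\rangle_{\ns}^{2}
\leq \sum_{j=n+2}^{\infty}\langle u, v_j\rangle_{\ns}^{2}
\leq \Vert u - \Pi_{V(\Lambda_{n+1})}(u)\Vert_{\ns}^{2}.
\]
Combining these bounds and taking a square root gives
\[
\left[\prod_{i=n+1}^{2n} \frac{|\lambda_{i+1}(r_i)|}{P_{\Lambda_i}(\lambda_{i+1})}\right]^{1/n}
\leq n^{-1/2}\,\Vert u - \Pi_{V(\Lambda_{n+1})}(u)\Vert_{\ns},
\]
which, combined with the initial factorization, is exactly the claim.

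The only real point of care is the bookkeeping in identifying the Newton coefficient from Eq.\ \eqref{eq:newtown_coefficient} (which is stated as $\langle u, v_n\rangle_{\ns} = \lambda_n(r_{n-1})/P_{\Lambda_{n-1}}(\lambda_n)$) with the ratio appearing after factorization, and in verifying that the truncation index $n+1$ on the residual $u-\Pi_{V(\Lambda_{n+1})}(u)$ matches the lower summation index $j = n+2$ in the tail of the Parseval expansion. Everything else is a short chain of AM-GM, orthonormality of the Newton basis, and the generalized power function identity, all of which are already established in Subsection \ref{subsubsec:generalized_kernerl_interpolation}.
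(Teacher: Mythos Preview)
Your proof is correct and follows essentially the same approach as the paper: factor out the power function values, apply AM--GM to the squared ratios, identify the ratios as Newton basis coefficients via Eq.~\eqref{eq:newtown_coefficient}, and bound the resulting partial sum by $\|r_{n+1}\|_{\ns}^2$. The only cosmetic difference is that the paper writes the sum as $\|s_{2n+1}\|_{\ns}^2 - \|s_{n+1}\|_{\ns}^2 \leq \|u\|_{\ns}^2 - \|s_{n+1}\|_{\ns}^2$ rather than directly as a tail of the Parseval expansion, but this is the same estimate.
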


\noindent Now we can adapt \cite[Lemma 7]{wenzel2022analysis}. %

\begin{lemma}[Generalization of Lemma 7 from \cite{wenzel2022analysis}] \label{lem:beta_greedy}
Any PDE-$\beta$-greedy algorithm with $\beta \in [0,\infty]$ applied to a function $u \in \ns$ yields the following bounds for the error $e_i = u - \Pi_{V_i}(u)$ for $i\geq 0$
\begin{enumerate}[label={\alph*)}]
\item in the case of $\beta \in [0, 1]$
\begin{align}\label{eq:bound_ri_first}
\sup_{\lambda \in \Lambda} |\lambda(e_i)| &\leq |\lambda_{i+1}(e_i)|^{\beta} \cdot P_{\Lambda_i}(\lambda_{i+1})^{1-\beta} \cdot \Vert e_i \Vert_{\ns}^{1-\beta},
\end{align}
\item in the case of $\beta \in (1, \infty]$ with $1/\infty := 0$
\begin{align}\label{eq:bound_ri_second}
\sup_{\lambda \in \Lambda} |\lambda(e_i)| &\leq \frac{|\lambda_{i+1}(e_i)|}{P_{\Lambda_i}(\lambda_{i+1})^{1-1/\beta}} \cdot \sup_{\lambda \in \Lambda} P_{\Lambda_i}(\lambda)^{1 - 1/\beta}.
\end{align}
\end{enumerate}
\end{lemma}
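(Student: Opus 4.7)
The plan is to use two ingredients only: the defining selection criterion of the PDE-$\beta$-greedy algorithm, and the generalized power-function bound $|\lambda(u)|\leq P_{\Lambda_i}(\lambda)\cdot\|u\|_{\ns}$ applied to $u=r_i$, which gives in particular the one-line inequality
\begin{equation*}
P_{\Lambda_i}(\lambda)\ \geq\ \frac{|\lambda(r_i)|}{\Vert r_i\Vert_{\ns}}\quad\text{for every }\lambda\in\Lambda.
\end{equation*}
Both parts then reduce to a short algebraic rearrangement of the inequality that the greedy maximizer satisfies against an arbitrary competitor $\lambda$.

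For part \textbf{a)} (case $\beta\in[0,1]$) I start from the selection criterion \eqref{eq:beta_greedy_selection_criterion}, which for every $\lambda\in\Lambda$ yields
\begin{equation*}
|\lambda(r_i)|^{\beta}\cdot P_{\Lambda_i}(\lambda)^{1-\beta}\ \leq\ |\lambda_{i+1}(r_i)|^{\beta}\cdot P_{\Lambda_i}(\lambda_{i+1})^{1-\beta}.
\end{equation*}
Since $1-\beta\geq 0$, I use the power-function lower bound above to replace $P_{\Lambda_i}(\lambda)^{1-\beta}$ on the left by $|\lambda(r_i)|^{1-\beta}\Vert r_i\Vert_{\ns}^{-(1-\beta)}$. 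This collapses the two factors on the left into $|\lambda(r_i)|$, delivering \eqref{eq:bound_ri_first} after taking the supremum in $\lambda$. The edge cases $\beta=0$ and $\beta=1$ are conceptually transparent: $\beta=0$ reduces to the standard power-function inequality, while $\beta=1$ follows directly from the selection rule without needing the power-function bound. I will briefly remark that when $|\lambda(r_i)|=0$ the inequality is trivial, so that all manipulations involving $|\lambda(r_i)|^{1-\beta}$ in a denominator are legitimate.

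For part \textbf{b)} ($\beta\in(1,\infty]$), I treat finite $\beta$ and the limit case separately. For $\beta\in(1,\infty)$ I again start from \eqref{eq:beta_greedy_selection_criterion}; since $\beta>0$ I may raise both sides to the power $1/\beta$, obtaining
\begin{equation*}
|\lambda(r_i)|\cdot P_{\Lambda_i}(\lambda)^{(1-\beta)/\beta}\ \leq\ |\lambda_{i+1}(r_i)|\cdot P_{\Lambda_i}(\lambda_{i+1})^{(1-\beta)/\beta}.
\end{equation*}
Because $(1-\beta)/\beta<0$, I move the $P_{\Lambda_i}(\lambda)$-factor to the right side with exponent $1-1/\beta>0$ and bound it by $\sup_{\lambda\in\Lambda}P_{\Lambda_i}(\lambda)^{1-1/\beta}$ before taking the supremum on the left; this yields \eqref{eq:bound_ri_second}. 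For $\beta=\infty$ (setting $1/\beta=0$) the PDE-$f/P$-greedy selection rule directly says $|\lambda(r_i)|/P_{\Lambda_i}(\lambda)\leq|\lambda_{i+1}(r_i)|/P_{\Lambda_i}(\lambda_{i+1})$, from which the desired inequality follows after multiplying by $P_{\Lambda_i}(\lambda)$ and taking the supremum.

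The main technical obstacle I anticipate is purely bookkeeping: making sure all exponents stay nonnegative (or all nonpositive) in the right places so that monotonicity arguments and the replacement of $P_{\Lambda_i}(\lambda)$ by its lower bound via the power-function inequality go in the correct direction. The case $\beta\in(1,\infty)$ in particular requires care because $1-\beta$ is negative, so the raw greedy inequality points the wrong way before one extracts the $1/\beta$-root; once the root is taken the signs line up and the argument becomes essentially identical to part a). The boundary values $\beta\in\{0,1,\infty\}$ and the possibility that $P_{\Lambda_i}(\lambda_{i+1})=0$ (which forces $r_i=0$ and makes the statement trivial) should be handled by short side remarks rather than separate proofs.
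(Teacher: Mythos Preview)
Your proposal is correct and follows essentially the same approach as the paper: both proofs combine the $\beta$-greedy selection criterion with the generalized power-function inequality $|\lambda(r_i)|\leq P_{\Lambda_i}(\lambda)\,\Vert r_i\Vert_{\ns}$ in part a), and use only the selection criterion (rearranged and with a $1/\beta$-th root) in part b). The only differences are cosmetic orderings of the algebra---the paper first isolates $P_{\Lambda_i}(\tilde\lambda_{i+1})$ for the maximizer $\tilde\lambda_{i+1}$ and then applies the power-function bound, whereas you substitute the bound directly for arbitrary $\lambda$ before taking the supremum; in part b) the paper takes the supremum before the $\beta$-th root while you take the root first---but the underlying ideas and ingredients are identical.
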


\noindent Now also the main result of \cite[Section 4]{wenzel2022analysis} can be adapted.

\begin{theorem}[Generalization of Theorem 8 from \cite{wenzel2022analysis}] \label{th:final_result}
  Any PDE-$\beta$-greedy algorithm with $\beta \in [0,\infty]$ applied to a
  function $u \in \ns$ satisfies the following error bound for $n\geq 1$:
\begin{enumerate}[label={\alph*)}]
\item In the case of $\beta \in [0, 1]$
\begin{align} \label{eq:final_result_1}
\left[ \prod_{i=n+1}^{2n} \sup_{\lambda \in \Lambda} |\lambda(e_i)| \right]^{1/n} \leq n^{-\beta/2} \cdot \Vert e_{n+1} \Vert_{\ns} \cdot \left[ 
\prod_{i=n+1}^{2n} P_{\Lambda_i}(\lambda_{i+1}) \right]^{1/n},
\end{align}
\item In the case of $\beta \in (1, \infty]$: %
\begin{equation}
\begin{aligned} \label{eq:final_result_2}
\left[ \prod_{i=n+1}^{2n} \sup_{\lambda \in \Lambda} |\lambda(e_i)| \right]^{1/n} \leq n^{-1/2} &\cdot \Vert e_{n+1} \Vert_{\ns} \cdot \left[ \prod_{i=n+1}^{2n} 
P_{\Lambda_i}(\lambda_{i+1})^{1/\beta} \right]^{1/n}. 
\end{aligned}
\end{equation}
\end{enumerate}
\end{theorem}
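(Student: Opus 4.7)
The plan is to combine the pointwise residual bounds of Lemma \ref{lem:beta_greedy} with the geometric-mean bound from Lemma \ref{lem:estimate_product}. In both cases, I would take the inequality furnished by Lemma \ref{lem:beta_greedy}, multiply it over $i = n+1, \ldots, 2n$, and extract the $1/n$-th power, then invoke Lemma \ref{lem:estimate_product} (possibly raised to a suitable power) to convert the resulting product $\prod_i |\lambda_{i+1}(r_i)|$ into terms involving $\Vert r_{n+1} \Vert_\ns$ and $\prod_i P_{\Lambda_i}(\lambda_{i+1})$.

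For case (a) with $\beta \in [0,1]$, multiplying Lemma \ref{lem:beta_greedy}(a) over $i$ splits the right-hand side into three factors: $\prod |\lambda_{i+1}(r_i)|^\beta$, $\prod P_{\Lambda_i}(\lambda_{i+1})^{1-\beta}$, and $\prod \Vert r_i \Vert_\ns^{1-\beta}$. The third factor is handled by noting that $V(\Lambda_i)$ is nested and hence $\Vert r_i \Vert_\ns$ is non-increasing, so that $\Vert r_i \Vert_\ns \leq \Vert r_{n+1} \Vert_\ns$ for all $i \geq n+1$; using $1-\beta \geq 0$ this yields $\prod_{i=n+1}^{2n} \Vert r_i \Vert_\ns^{1-\beta} \leq \Vert r_{n+1} \Vert_\ns^{(1-\beta)n}$. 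For the first factor, I would apply Lemma \ref{lem:estimate_product} raised to the power $\beta$, producing $n^{-\beta/2}\, \Vert r_{n+1} \Vert_\ns^\beta \, [\prod P_{\Lambda_i}(\lambda_{i+1})]^{\beta/n}$. After taking $n$-th roots and multiplying, the exponents $\beta$ and $1-\beta$ on $\Vert r_{n+1} \Vert_\ns$ combine to $1$, and likewise on $\prod P_{\Lambda_i}(\lambda_{i+1})$, giving exactly \eqref{eq:final_result_1}.

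For case (b) with $\beta \in (1,\infty)$, Lemma \ref{lem:beta_greedy}(b) writes the residual bound as a quotient times the factor $\sup_\lambda P_{\Lambda_i}(\lambda)^{1-1/\beta}$. I would first observe that, by the kernel normalization \eqref{eq:pf_normalization} together with monotonicity of the power function in $i$, $\sup_\lambda P_{\Lambda_i}(\lambda) \leq 1$; since $1-1/\beta \geq 0$, this factor may be discarded. Multiplying the remaining quotient over $i$, taking the $n$-th root, and invoking Lemma \ref{lem:estimate_product} on $\prod|\lambda_{i+1}(r_i)|$ yields an overall exponent on $\prod P_{\Lambda_i}(\lambda_{i+1})$ of $1 - (1-1/\beta) = 1/\beta$, which is exactly \eqref{eq:final_result_2}.

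The real work has already been done in Lemmas \ref{lem:estimate_product} and \ref{lem:beta_greedy}, so what remains here is mainly a careful accounting of exponents. The only subtle points are the monotonicity of $\Vert r_i \Vert_\ns$ in case (a) and the invocation of the normalization \eqref{eq:pf_normalization} in case (b); both are standard consequences of working in the RKHS framework and should present no obstacle.
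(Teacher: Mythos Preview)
Your proposal is correct and follows essentially the same route as the paper: both arguments combine Lemma~\ref{lem:beta_greedy} with Lemma~\ref{lem:estimate_product}, use the monotonicity $\Vert r_i \Vert_{\ns} \leq \Vert r_{n+1} \Vert_{\ns}$ in case~(a), and discard the factor $\sup_\lambda P_{\Lambda_i}(\lambda)^{1-1/\beta}\leq 1$ via the normalization~\eqref{eq:pf_normalization} in case~(b). The only cosmetic difference is that the paper first rearranges Lemma~\ref{lem:beta_greedy} into a lower bound on $|\lambda_{i+1}(r_i)|$ and substitutes it into Lemma~\ref{lem:estimate_product} (and treats $\beta=0$ separately), whereas you multiply Lemma~\ref{lem:beta_greedy} directly over $i$ and then apply Lemma~\ref{lem:estimate_product} raised to the power $\beta$; the bookkeeping of exponents is identical.
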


We remark that also further results from \cite{wenzel2022analysis} can be extended to this generalized setting, e.g.\ \cite[Corollary 9]{wenzel2022analysis} which proves an improved standard power function estimate.  

\section{Convergence rates for PDE-greedy algorithms}
\label{sec:conv_rates_pde_greedy}

This section serves as an amalgamation of the results from the previous sections: 
We apply the analysis of the PDE-$\beta$-greedy algorithms from \Cref{sec:analysis_fgreedy} to concrete kernels and PDEs. 
By using estimates on the corresponding Kolmogorov $n$-widths from \Cref{sec:analysis_kolm_width} we obtain convergence rates for PDE-greedy algorithms
in terms of the error in the PDE functionals. 
Using the maximum principle for elliptic operators (see Eq.~ \eqref{eq:maximum_principle_elliptic_operators}) we can transfer the obtained convergence rates directly to convergence rates of the approximation of the true solution.

\begin{theorem} 
\label{th:main_theorem_conv_to_solution}
We consider the BVP as in Eq.~\eqref{eq:pde}
under \hyperref[ass:assumptions]{Assumption 1} and there exists a solution in $H^\tau(\Omega)$, and we assume that $k$ is a Sobolev kernel of smoothness $\tau > d/2 + 2$, i.e.\ $\ns \asymp H^\tau(\Omega)$.

Then any PDE-$\beta$-greedy algorithm with $\beta \in [0,\infty]$
satisfies the following error bound for
$n\geq 3$
(using $1/\infty = 0$):
\begin{equation}
\label{eq:decay_result_1}
\min_{n+1\leq i\leq 2n}\Vert e_i \Vert_{L^\infty(\Omega)} \leq C \cdot  n^{-\frac{\min\{1, \beta\}}{2}} (\log(n)\cdot n^{-1})^{\frac{\alpha}{\max\{1, \beta\}}} 
\Vert e_{n+1} \Vert_{\ns},
\end{equation}
with $C:=\left(2^{\alpha+1/2} \max\{1, C_0\} e^\alpha\right)^{\frac{1}{\max\{1, \beta\}}}$ and $\alpha := \frac{\tau - 2}{d} - \frac{1}{2} > 0$.
In particular
\begin{align*}
\min_{n+1\leq i\leq 2n} \Vert e_i \Vert_{L^\infty(\Omega)} &\leq C \cdot \log(n)^{\alpha} \cdot \Vert e_{n+1} \Vert_{\ns} \cdot \left\{
\begin{array}{ll}
n^{-\alpha-1/2} & \text{PDE}-f-\text{greedy} \\
n^{-\alpha-1/4} & \text{PDE}-f \cdot P-\text{greedy} \\
n^{-\alpha} &     \text{PDE}-P-\text{greedy}
\end{array}
\right..
\end{align*}
\end{theorem}

Note that the theorem also could be formulated for $n\geq1$ by enlarging
the constant $C$.

\begin{proof}
The proof is structured into several steps:
\begin{itemize}
\item Since $k$ is a kernel of finite smoothness such that $\ns \asymp H^\tau(\Omega)$, \Cref{th:kolmogorov_estimate_Lambda} gives a decay of the corresponding Kolmogorov $n$-width $d_n(\Lambda)$ of
\begin{align*}
d_n(\Lambda) \leq C \cdot n^{\frac{1}{2} - \frac{\tau - 2}{d}} \equiv C \cdot n^{-\alpha}.
\end{align*}
This bound satisfies the assumptions of \Cref{cor:decay_powerfunc_quantity}, such that it can be applied to derive a decay of the geometric mean of subsequent power function values as
\begin{align} %
\left(\prod_{i=n+1}^{2 n} P_{\Lambda_i}(\lambda_{i+1}) \right)^{1/n} 
&\leq2^{\alpha+1/2} \tilde{C}_0 e^\alpha  \log(n)^\alpha n^{-\alpha}, \;\;n\geq 3,
\end{align}
for any choice of functionals, thus especially for the $\beta$-greedy choices.
\item Now we apply \Cref{th:final_result}, especially Eq.~\eqref{eq:final_result_1} for $\beta \in [0, 1]$ and Eq.~\eqref{eq:final_result_2} for $\beta \in (1, \infty]$ and insert the bound on the geometric mean of the generalized power function values from the previous step. This gives for $\beta \in [0,1],n\geq 3$
\begin{align*}
\left[ \prod_{i=n+1}^{2n} \sup_{\lambda \in \Lambda} |\lambda(e_i)| \right]^{1/n} 
&\leq n^{-\beta/2} \cdot \Vert e_{n+1} \Vert_{\ns} \cdot \left[ \prod_{i=n+1}^{2n}
P_{\Lambda_i}(\lambda_{i+1}) \right]^{1/n} \\
&\leq n^{-\beta/2} \cdot \Vert e_{n+1} \Vert_{\ns} \cdot 2^{\alpha+1/2} \tilde{C}_0 e^\alpha  \log(n)^\alpha n^{-\alpha},
\end{align*}
and for $\beta \in (1, \infty]$ (using $1/\infty = 0$) and again $n\geq 3$
\begin{align*}
\left[ \prod_{i=n+1}^{2n} \sup_{\lambda \in \Lambda} |\lambda(e_i)| \right]^{1/n} 
&\leq n^{-1/2} \cdot \Vert e_{n+1} \Vert_{\ns} \cdot \left[ \prod_{i=n+1}^{2n} 
P_{\Lambda_i}(\lambda_{i+1})^{1/\beta} \right]^{1/n} \\
&\leq n^{-1/2} \cdot \Vert e_{n+1} \Vert_{\ns} \cdot \left( 2^{\alpha+1/2} \tilde{C}_0 e^{\alpha} \right)^{1/\beta} \log(n)^{\alpha/\beta} n^{-\alpha/\beta}.
\end{align*}
In both cases, the left hand side can be estimated with help of
\begin{equation*}
\min_{n+1 \leq j \leq 2n}\sup_{\lambda \in \Lambda} |\lambda(e_j)| \leq \sup_{\lambda \in \Lambda} |\lambda(e_i)|
\end{equation*}
for all $i=n+1, \dots, 2n$, which finally yields
\begin{align*}
\min_{n+1 \leq j \leq 2n} \sup_{\lambda \in \Lambda} |\lambda(e_j)|
\leq C' \cdot  n^{-\frac{\min\{1, \beta\}}{2}} (\log(n)\cdot n^{-1})^{\frac{\alpha}{\max\{1, \beta\}}} 
\Vert e_{n+1} \Vert_{\ns}
\end{align*}
with $C'= (2^{\alpha+\frac12} \tilde C_0 e^\alpha)^{\frac{1}{\max(1,\beta)}}$.
\item Now we make use of the maximum principle from Eq.~\eqref{eq:maximum_principle_elliptic_operators} which is available due to our assumptions on the BVP. Therefore it holds
\begin{align*}
\Vert u - s_n \Vert_{L^\infty(\Omega)} 
&\leq C ( \Vert u - s_n \Vert_{L^\infty(\partial \Omega)} + \Vert Lu - Ls_n \Vert_{L^\infty(\Omega)}) \\
&= C \cdot \sup_{\lambda \in \Lambda_L \cup \Lambda_B} |\lambda(u - s_n)|,\\
\Rightarrow \min_{j=n+1, \dots, 2n} \Vert u - s_j \Vert_{L^\infty(\Omega)} &\leq C \cdot \min_{j=n+1, \dots, 2n} \sup_{\lambda \in \Lambda_L \cup \Lambda_B} |\lambda(u - s_j)|.
\end{align*}
Since $e_j = u - s_j$ we can directly combine this last equation with the result of the previous step to obtain
\begin{align*}
  \min_{j=n+1, \dots, 2n} \Vert u - s_j \Vert_{L^\infty(\Omega)} \leq&
                                                                    CC'\cdot n^{-\frac{\min\{1, \beta\}}{2}} \cdot \\
&\quad (\log(n)\cdot n^{-1})^{\frac{\alpha}{\max\{1, \beta\}}} 
\Vert e_{n+1} \Vert_{\ns},
\end{align*}
and this concludes the proof.
\end{itemize}

\end{proof}

For $\beta \in (0, 1]$ it can be observed that we obtain a faster rate of convergence in the number of selected functionals $\lambda_i \in \Lambda_L \cup \Lambda_B$ than for $\beta = 0$ 
and therefore in the number of selected collocation points $x_i \in \Omega \cup \partial \Omega$.
This additional decay of $n^{-\beta/2}$ increases with increasing $\beta \in (0, 1]$, 
which corresponds to a more ``target-data dependent'' choice of the next
interpolation point according to the definition of PDE-$\beta$-greedy algorithms,
see \Cref{def:beta_greedy_algorithms}.
In particular the proven decay for PDE-$f$-greedy ($\beta = 1$) is better than the proven decay for the PDE-$P$-greedy.
This makes intuitively sense, since the PDE-$f$-greedy choice is adapted to the right hand side of the BVP, while the PDE-$P$-greedy is independent of the right hand side and only
worst-case optimal \cite{schaback2019greedy}.
It is furthermore optimal in the sense of realizing the same decay rate as the
Kolmogorov $n$-width (which is based on the abstract result of \cite{devore2013greedy}).
A visualization of the different point distributions can be found in
\Cref{subsec:smooth_case}.
Moreover, for $\beta\in(0,1]$ the new estimate~\eqref{eq:decay_result_1} breaks the curse of dimensionality.
Indeed, since $\tau>d/2 +2$ by assumption, we can only guarantee $\alpha = \frac{1}{2} - \frac{\tau - 2}{d}<0$ and thus the term 
$(\log(n) \cdot n^{-1})^{\alpha}$ may yield arbitrarily slow convergence, unless additional smoothness is taken into consideration. On the contrary, the term 
$n^{-\beta/2}$ is converging to zero at a speed that is independent on the dimension $d$ of the space. 
This breaking of the curse of dimensionality is theoretically and practically appealing for high dimensional PDE problems.
Note that the constant $C$ within Eq.~\eqref{eq:decay_result_1} usually depends on the dimension $d$, and in particular might grow in $d$.
Also the factor $n^{-\alpha}$ depends on the dimension $d$, especially $\alpha$ decreases in $d$.
We point to \cite{rieger2024approximability} for a discussion on the curse of dimensionality within kernel-based approximation.

As already remarked in \cite{wenzel2022analysis}, we conjecture that the analysis so far is suboptimal. 
In particular in view of \cite{santin2024optimality} we do not expect that the
additional $\log(n)^\alpha$ factor is really required, which occurs for $\beta > 0$.

Finally we want to remark that the whole analysis also works for PDEs on manifolds, e.g.\ \Cref{th:final_result} holds for both $\Omega$ a domain or a manifold.
The analysis of target-data dependent greedy kernel algorithms works in very general terms, 
because it simply leveraged the abstract analysis of \cite{devore2013greedy} and \cite[Section 3]{wenzel2022analysis} for Hilbert spaces and connects it to kernel approximation in RKHS.

In order to derive practically usable convergence rates, the Kolmogorov $n$-widths need to be analyzed (\Cref{sec:analysis_kolm_width}), and to derive those for manifolds, sampling inequalities and corresponding zero lemmas for manifolds can be leveraged, as e.g.\ in \Cref{th:sampling_inequ_manifolds}.
In order to transfer the resulting convergence rates on $\sup_{\lambda \in \Lambda} |\lambda(e_i)|$ to convergence rates on $e_i \rightarrow 0$ in some norm, PDE solution theory is required, e.g.\ some well-posedness estimate or a maximum principle.

\begin{rem}
Similar statements as in \Cref{th:main_theorem_conv_to_solution} can be derived for the case of analytic kernels such as the Gaussian kernel, but we do 
not consider them here since their RKHSs are difficult to describe~\cite{steinwart2006explicit}, and especially they are usually subsets of 
Sobolev spaces of arbitrary orders. 
This means one usually expects an exponential rate of convergence, 
such that the additional target-data dependent decay rate as $(\log(n) \cdot n^{-1})^\alpha$ is not that appealing.

However, it should be noted that already in the case $\beta=0$ no converge rates of the various PDE-greedy methods with these kernels are known in the 
literature. 
Moreover, even if interpolation with these kernels usually gives spectral convergence that scales as $\exp(-n^{1/d})$, for $\beta>0$ the additional 
$\beta$-dependent term is still mitigating the curse of dimensionality, and in particular the dimension-independent rate of $-1/2$ is 
in line with the rates of interpolation obtained in \cite{Fasshauer2012c} for the Gaussian kernel.

\end{rem}

\section{Numerical experiments} \label{sec:numerical_exp}

The implementation of the PDE greedy algorithms is coined PDE-VKOGA, in analogy to the Vectorial Kernel Orthogonal Greedy Algorithm (VKOGA) for inteporlation, see \cite{Wirtz2013}, and is provided via a corresponding git repository\footnote{\url{https://gitlab.mathematik.uni-stuttgart.de/pub/ians-anm/pde-vkoga}}.
The code for reproducing the subsequent numerical experiments is also freely available\footnote{\url{https://gitlab.mathematik.uni-stuttgart.de/pub/ians-anm/paper-2024-pde-greedy}}.

In order to better balance between the importance of the functionals in the interior $\Omega$ and on the
boundary $\partial \Omega$,
we introduce a corresponding weighting into the PDE-greedy selection criteria and thus use
\begin{align*}
\lambda_{n+1} = \argmax_{0 \neq \lambda \in \Lambda \setminus \Lambda_n}
\begin{cases}
|\lambda(u-\Pi_{V_n}(u))|^\beta \cdot P_{\Lambda_n}(\lambda)^{1-\beta} & \lambda \in \Lambda_L \\
w \cdot |\lambda(u-\Pi_{V_n}(u))|^\beta \cdot P_{\Lambda_n}(\lambda)^{1-\beta} & \, \lambda \in \Lambda_B.
\end{cases}
\end{align*}

As the greedy approach can be interpreted
as a pivoted partial Cholesky decomposition of the kernel collocation matrix (see~\cite{Pazouki2011}),
the choice of the pivoting elements is influenced by this weighting.

As a first experiment we choose a low-dimensional case of $d=2$ with a smooth solution which allows
comparisons of the different greedy variants with the finite element method. 
The second experiment adresses the case of solutions in $d=2$ with singularities, which are
more challenging for the present approach. 
As final setting we demonstrate the potential of the greedy procedures for high-dimensional problems where
$d=12$.

\subsection{Smooth case for $d=2$}
\label{subsec:smooth_case}

As domain we consider the sector of the 2D unit ball with opening
angle $\alpha \pi$ for $\alpha= \frac23$ as visualized in \Cref{fig:smooth_2D_vis_collocation_points}. %
We consider the boundary value problem
\begin{alignat*}{2}
-\Delta u &= f, &&\text{on } \Omega, \\
u &= g, \qquad &&\text{on } \partial \Omega,
\end{alignat*}
which has a
smooth solution $u(x) = \Vert x \Vert^2$ for source function $f(x) = -4$
and Dirichlet-boundary values $g(x) = u(x)$.

As a kernel, we use the cubic Matérn kernel, 
which is a translational invariant kernel with $\Phi(x - y) = \varphi(\Vert x - y \Vert)$, 
$\varphi(r) = (15 + 15r + 6r^2 + r^3) \cdot \exp(-r)$.
The sets 
$\Omega$ and $\partial \Omega$ were uniformly randomly
sampled using $50000$ and $1000$ points, respectively.
The $L^2(\Omega)$ and $L^\infty(\Omega)$ errors were numerically
approximated using a uniform grid of mesh width $2\cdot 10^{-3}$
(approximately 250000 points).

We consider PDE-$P$-, PDE-$f \cdot P$- and PDE-$f$-greedy, using weight factors of $w \in \{10^0, 10^1, ..., 10^8\}$.
The results are displayed in \Cref{tab:smooth_2D_errors} (for $w=10^6$), 
\Cref{fig:smooth_2D_weight_dependence} and \Cref{fig:smooth_2D_vis_collocation_points}:
First we compare the approximation of the three PDE-greedy methods to a classical P1 finite element method (FEM),
which uses a triangular grid with polygonal approximation of the sector and different global refinement levels.
\Cref{tab:smooth_2D_errors} lists the $L^2(\Omega)$ and the $L^\infty(\Omega)$ approximation error for $u - s_n$ for several expansion sizes, which match the number of degrees of freedom for the FEM solution.
Note that we do not compute the kernel approximants for more than 20000 degrees of freedom, 
as our greedy method and its current implementation are most efficient for smaller expansion sizes.
For PDE-$f \cdot P$-greedy and PDE-$f$-greedy, a stability related stopping criterion limited the expansion sizes to 2061 (PDE-$f \cdot P$-greedy) respectively 1229 (PDE-$f$-greedy).
The corresponding numbers were marked with a star.

Note that the occuring kernel matrices are full, while the corresponding FEM matrices are sparse.
While this can be seen as a weakness of the kernel-based approach, this limited number of points
suffices to reach a comparably high accuracy. 
Moreover, they work well in high dimensions where FEM is infeasible, see also \Cref{subsec:high_dim_example}.

In any case, the final FEM accuracy using 20193 DOFs is already satisfied by the kernel models using 341 collocation points.
This clearly demonstates that both  PDE-$P$-greedy and PDE-$f$-greedy outperform the FEM approximation. 
\Cref{fig:smooth_2D_weight_dependence} gives an overview on the influence of the weighting parameter $w$ for PDE-$f$-, PDE-$f \cdot P$- and PDE-$P$-greedy. 
For all the cases, the plots show the decay of the errors $\sup_{\lambda \in \Lambda} |\lambda(u-s_n)|$ and $\sup_{x \in \Omega} |(u-s_n)(x)|$ in the number of collocation points.
Given a suitable weighting (of e.g.\ $w = 10^6$), all three PDE-greedy methods provide approximately the same rate of decay for the $\sup_{x \in \Omega} |(u-s_n)(x)|$ error (right column).
However in terms of the $\sup_{\lambda \in \Lambda} |\lambda(u-s_n)|$ decay,
the PDE-$f$-greedy method provides the faster rate of decay (left column).
\Cref{fig:smooth_2D_vis_collocation_points} visualizes the selected collocation points for both PDE-$f$-greedy and PDE-$P$-greedy for using a weighting of $w=10^6$. 
One can observe that PDE-$f$-greedy chooses more collocation points close to the boundary,
while the PDE-$P$-greedy collocation points are rather uniformly distributed, which can be even proved in the case of interpolation (instead of collocation), see \cite{wenzel2021novel}.

\begin{table}[h]
    \centering
    \caption{Numerical results regarding \Cref{subsec:smooth_case}: 
       Errors for \textbf{smooth} solution on sector example.
       The results marked with $^*$ were obtained using smaller expansion sizes as elaborated in the main text.
	}
    \begin{tabular}{|l||c|c||c|c|c|}
    \hline
    & \textbf{\# DOFs} & 341 & 1305 & 5105 & 20193 \\ \hline
    \multirow{ 2}{*}{FEM} & $L^2(\Omega)$ & 1.33e-3 & 3.35e-4 & 8.38e-5 & 2.10e-5 \\ 
    & $L^\infty(\Omega)$ & 2.18e-3 & 6.06e-4 & 1.64e-4 & 4.40e-5 \\ \hline
    \multirow{ 2}{*}{PDE-$P$-greedy}    & $L^2(\Omega)$ & 3.38e-7 & 2.05e-8 & 4.44e-9 & - \\ 
    & $L^\infty(\Omega)$ & 1.89e-6 & 1.58e-7 & 1.02e-8 & - \\ \hline
    \multirow{ 2}{*}{PDE-$f \cdot P$-greedy}        & $L^2(\Omega)$ & 3.44e-7 & 2.08e-8 & 5.92e-9$^*$ & - \\ 
    & $L^\infty(\Omega)$ & 7.90e-7 & 4.21e-8 & 1.29e-8$^*$ & - \\ \hline
    \multirow{ 2}{*}{PDE-$f$-greedy}        & $L^2(\Omega)$ & 1.04e-6 & 7.24e-8$^{*}$ & - & - \\ 
    & $L^\infty(\Omega)$ & 2.44e-6 & 1.50e-7$^{*}$ & - & - \\ \hline
    \end{tabular}
    \label{tab:smooth_2D_errors}
\end{table}

\begin{figure}[h]
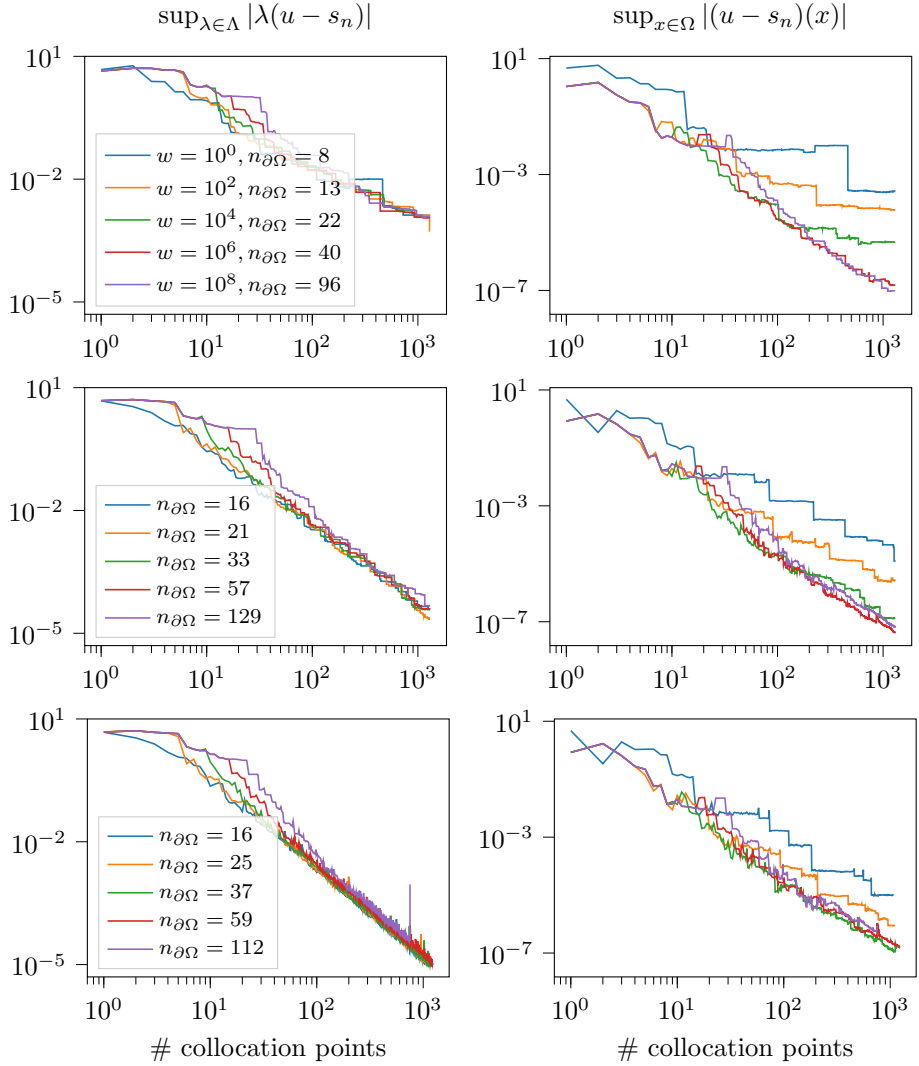

\centering
\setlength\fwidth{.55\textwidth}
\input{Figures/smooth_2D_beta_0_lambda.tex}
\input{Figures/smooth_2D_beta_0_nolambda.tex}

\input{Figures/smooth_2D_beta_0.5_lambda.tex}
\input{Figures/smooth_2D_beta_0.5_nolambda.tex}

\input{Figures/smooth_2D_beta_1_lambda.tex}
\input{Figures/smooth_2D_beta_1_nolambda.tex}
\caption{Numerical results regarding \Cref{subsec:smooth_case}: 
  Visualization of the decay of the errors 
$\sup_{\lambda \in \Lambda} |\lambda(e_i)|$
  (left) and $\sup_{x \in \Omega} |e_i(x)|$
  (right) for PDE-$P$-greedy (top), PDE-$f \cdot P$-greedy (middle), PDE-$f$-greedy (bottom) for different values of the weighting parameter $w$.
For each of the three PDE-greedy methods, the number of selected collocation points $n$ which are selected on the boundary is denoted by $n_{\partial \Omega}$.
}
\label{fig:smooth_2D_weight_dependence}
\end{figure}

\begin{figure}[h]
    \centering
    \includegraphics[width=0.48\textwidth]{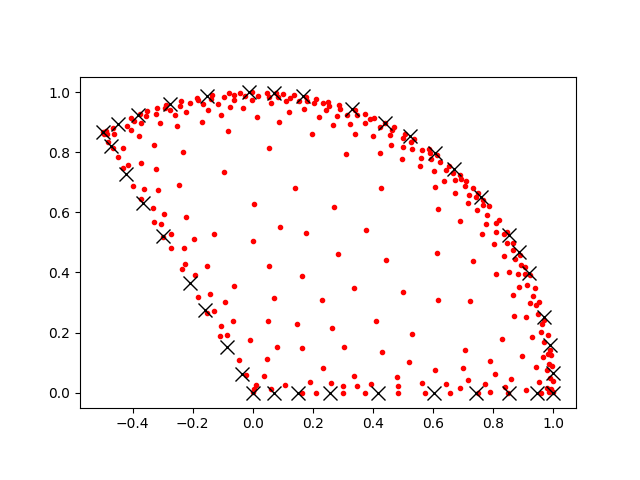}
    \includegraphics[width=0.48\textwidth]{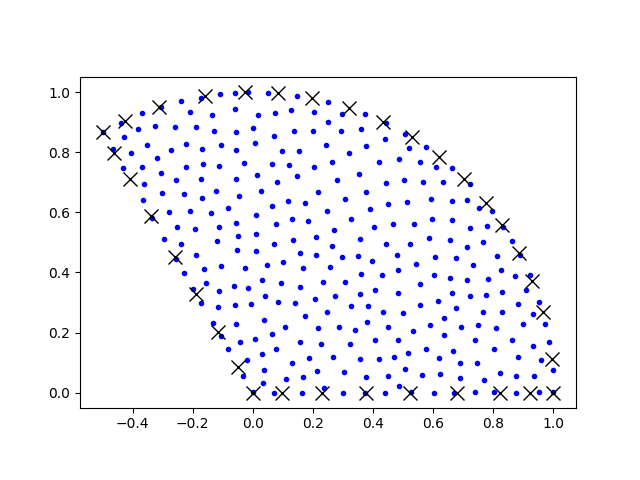}
    \caption{Numerical results regarding \Cref{subsec:smooth_case}: 
    Visualization of the distribution of 341 collocation points selected by PDE-$f$-greedy (left) and PDE-$P$-greedy (right) for the smooth solution.
    Collocation points on the boundary are visualized as crosses.
    On can observe that the PDE-$f$-greedy selected centers cluster adaptively next to the boundary,
    while the PDE-$P$-greedy centers are rather uniformly distributed.}
    \label{fig:smooth_2D_vis_collocation_points}
\end{figure}

\subsection{Singular case for $d=2$}
\label{subsec:singular_case} 

\begin{table}[h]
    \centering
    \caption{Numerical results regarding \Cref{subsec:singular_case}: 
       Errors for \textbf{singular} solution on sector example.
       The results marked with $^*$ were obtained using smaller expansion sizes as elaborated in the main text.
	}
    \begin{tabular}{|l||c|c||c|c|c|}
    \hline
    & \textbf{\# DOFs} & 341 & 1305 & 5105 & 20193 \\ \hline
    \multirow{ 2}{*}{FEM} & $L^2(\Omega)$ & 2.50e-4 & 6.31e-5 & 1.58e-5 & 3.97e-6 \\ 
    & $L^\infty(\Omega)$ & 2.39e-3 & 8.45e-4 & 2.95e-4 & 1.01e-4 \\ \hline
    \multirow{ 2}{*}{PDE-$P$-greedy} & $L^2(\Omega)$ & 9.04e-5 & 5.86e-5 & 4.71e-5 & - \\ 
    & $L^\infty(\Omega)$ & 1.23e-3 & 1.04e-3 & 1.38e-3 & - \\ \hline
    \multirow{ 2}{*}{PDE-$f$-greedy}        & $L^2(\Omega)$ & 2.39e-3 & 8.81e-5 & 1.46e-4$^{*}$ & - \\ 
    & $L^\infty(\Omega)$ & 5.11e-3 & 2.32e-4 & 3.58e-4$^{*}$ & - \\ \hline
    \end{tabular}
    \label{tab:singular_2D_errors}
\end{table}

\begin{figure}[h]
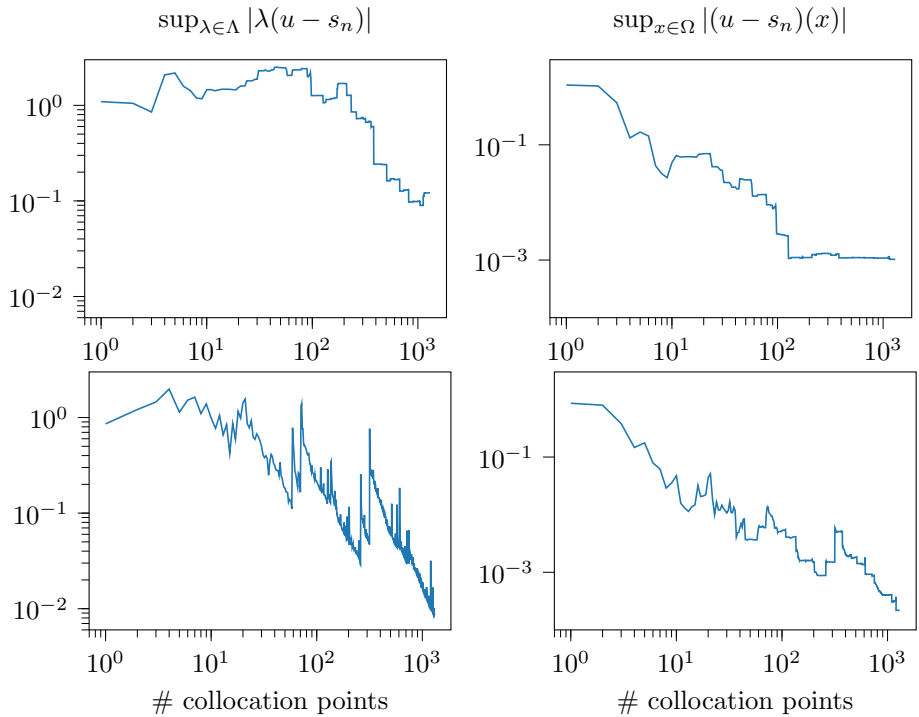

\centering
\setlength\fwidth{.55\textwidth}
\input{Figures/vis_decay_pacman_singular_p_lambda.tex}
\input{Figures/vis_decay_pacman_singular_p_sol.tex}

\input{Figures/vis_decay_pacman_singular_f_lambda.tex}
\input{Figures/vis_decay_pacman_singular_f_sol.tex}
\caption{
Numerical results regarding \Cref{subsec:singular_case}:
Visualization of the decay of the errors $\sup_{\lambda \in \Lambda} |\lambda(u - s_n)|$ (left) and $\sup_{x \in \Omega} |(u-s_n)(x)|$ (right) over the expansion size $n$ ($x$-axis) for PDE-$P$-greedy (top) using $w = 10^6$ %
and PDE-$f$-greedy (bottom) using $w=10^3$.
\label{fig:singular_2D_weight_dependence}
}
\end{figure}

\begin{figure}[h]
    \centering
    \includegraphics[width=0.48\textwidth]{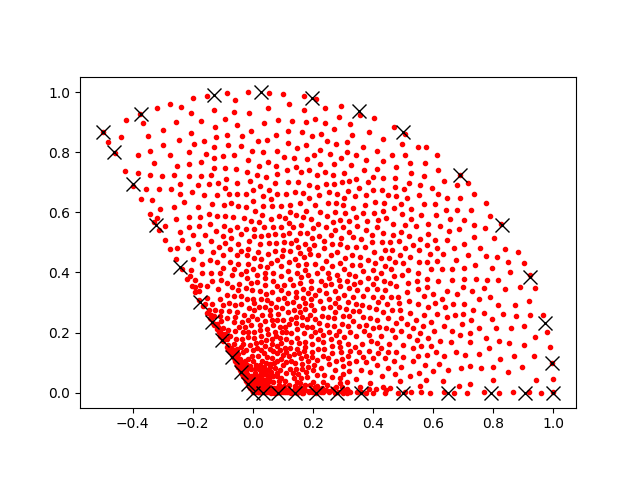}
    \includegraphics[width=0.48\textwidth]{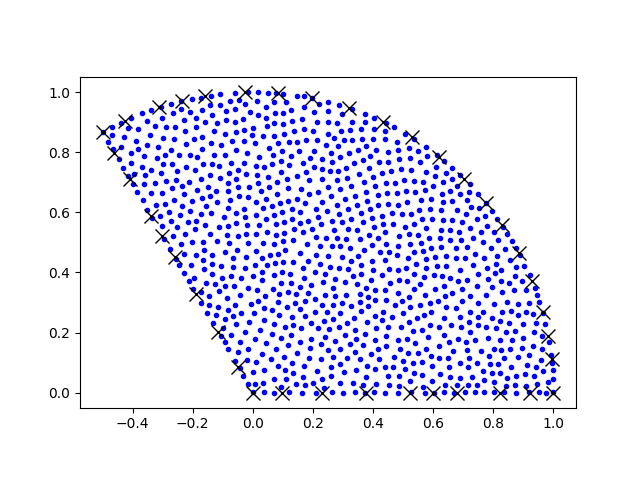}
    \caption{
    Numerical results regarding \Cref{subsec:singular_case}:    
    Visualization of the distribution of 1305 collocation points selected by PDE-$f$-greedy (left) and PDE-$P$-greedy (right) for the singular solution.
    Collocation points on the boundary are visualized as crosses.
    On can observe that the PDE-$f$-greedy selected centers cluster adaptively next to the singularity,
    while the PDE-$P$-greedy centers are rather uniformly distributed.}
    \label{fig:singular_2D_vis_collocation_points}
\end{figure}

As next example we consider the identical geometry and PDE as in the previous section, 
but now with data-functions that result in a solution with singularity in its derivatives,
i.e.\ the solution is 
\begin{align*}
u(x) = \Vert x \Vert^{1/\alpha}\sin(\phi(x)/\alpha)
\end{align*}

with $\phi(x) = \arctan(x_2/x_1)$, which is the solution of the PDE for $f(x) = 0$ and $g(x)=u(x)$.  
We apply the same kernel, sampling and greedy procedures as in the previous section. 

Instead of showing the results for all the weightings, we focus on suitable weightings which are given as $w = 10^3$ for the PDE-$f$-greedy and $w=10^6$ for the PDE-$P$-greedy.
The results are displayed in \Cref{tab:singular_2D_errors}, \Cref{fig:singular_2D_weight_dependence}
and \Cref{fig:singular_2D_vis_collocation_points}: As before, in \Cref{tab:singular_2D_errors} we show errors in both the $L^2(\Omega)$ and $L^\infty(\Omega)$ norms, 
including a comparison to a classical P1 FEM.
In this case of a singular solution, the PDE-greedy methods do no longer outperform the FEM, though still match the accuracy of the FEM.
For the PDE-$f$-greedy, the maximal expansion size was restricted to 1621 due to stability reasons, 
and the corresponding numbers are marked with a star.
We remark that the performance of the PDE-greedy method deteriorate further, when the singularity is intensified, e.g.\ by increasing the opening angle $\alpha$ towards $2$.
In \Cref{fig:singular_2D_weight_dependence} we observe that we still obtain convergence in the residual error and a bit
slower in the collocation functional values.
This convergence actually is a nice result, as it demonstrates that we can even approximate
some target functions which are lying outside of the RKHS of the chosen kernel. 
But we observe a clear quantitative difference compared to the smooth example, which is that the
errors only decay about half the orders of magnitude. Finally, 
\Cref{fig:singular_2D_vis_collocation_points} visualizes the selected collocation points for PDE-$f$-greedy 
and PDE-$P$-greedy. %
One can clearly observe a clustering of the PDE-$f$-greedy collocation points towards the singularity,
while the PDE-$P$-greedy collocation points are again uniformly distributed.

\subsection{High-dimensional PDE for $d=12$}
\label{subsec:high_dim_example}

In order to show the applicability of the proposed PDE-greedy methods in high dimensional problems,
we consider the linear Poisson problem
$-\Delta u(x) = -2d$ on the scaled unit hypercube $\Omega = d^{-1/2} \cdot [0,1]^d$ in dimension $d = 12$ with Dirichlet data on $\partial \Omega$ given by the function $g(x) = 1 + \sum_{i=1}^d x_i^2$.
Like this, the exact solution to the corresponding BVP is given by $u(x) = 1 + \sum_{i=1}^d x_i^2$.

We approximate the solution by approximants $s_n$ using PDE-$\beta$-greedy
with $\beta=1$, i.e. PDE-$f$-greedy and the cubic Mat\'ern kernel.

For the numerical experiment, the boundary $\partial \Omega$ was discretized using $9600$ uniformly
randomly sampled training points, 
while $10^5$ randomly sampled training points were used for the interior $\Omega$.
To evaluate the error in $\Omega$ independently of these training points,
we used further $10^5$ randomly sampled test points.

\Cref{fig:fgreedy_weights}
visualizes the resulting decay of the error $\Vert u - s_n \Vert_{L^\infty(\Omega)}$ as well as $\sup_{\lambda \in \Lambda} |\lambda(u-s_n)|$ over the expansion size $n$.
Overall the weighting of interior vs.\ boundary points seems to play an essential role
in the quality of the approximate solution.
In principle this weighting could as well be interpreted as an additional
hyperparameter of the greedy schemes, which could be chosen based on
suitable model selection procedures in order to further optimize the results. 
Independent of this dependence of the weighting, the error decay curves
clearly indicate that the scheme is applicably to higher-dimensional geometrical input
domains, where classical methods such as FEM are not applicable. 

\begin{figure}[h]
\centering
\setlength\fwidth{.54\textwidth}	%
\begin{tikzpicture}

\definecolor{darkgray176}{RGB}{176,176,176}
\definecolor{darkorange25512714}{RGB}{255,127,14}
\definecolor{forestgreen4416044}{RGB}{44,160,44}
\definecolor{lightgray204}{RGB}{204,204,204}
\definecolor{steelblue31119180}{RGB}{31,119,180}

\begin{axis}[
width=0.951\fwidth,
height=0.75\fwidth,
legend cell align={left},
legend style={
  fill opacity=0.8,
  draw opacity=1,
  text opacity=1,
  at={(0.97,0.97)},
  anchor=north east,
  draw=lightgray204
},
log basis x={10},
log basis y={10},
tick align=outside,
tick pos=left,
title={$\sup_{\lambda \in \Lambda} |\lambda(u-s_n)|$},
x grid style={darkgray176},
xmin=7.94288539149424, xmax=1260.24731651288,
xmode=log,
xtick style={color=black},
y grid style={darkgray176},
ymin=0.0641278387873326, ymax=33.0448600094758,
ymode=log,
ytick style={color=black}
]
\addplot [semithick, steelblue31119180, mark=x, mark size=3, mark options={solid}]
table {%
10 2.42303936650923
12 1.71095886782282
16 0.937356953402855
20 0.90107483949668
26 0.619600211055925
33 0.505434404607982
42 0.630590638012904
54 0.360096222921896
69 0.333749110315956
88 0.262244201795006
112 0.238468201982236
143 0.202136727579301
183 0.159310716072739
233 0.143563365686319
297 0.167451991851379
379 0.125422657040154
483 0.119768373071707
616 0.107669640326563
785 0.10395860782539
1001 0.0909097050322742
};
\addlegendentry{$w = 10^0$}
\addplot [semithick, darkorange25512714, mark=x, mark size=3, mark options={solid}]
table {%
10 3.34743775498087
12 3.20214041762726
16 1.99454086878041
20 1.94465928958183
26 1.00088520451607
33 0.787760986449186
42 0.438861517345828
54 0.392323316051545
69 0.328960223165673
88 0.252568510694587
112 0.257943886284764
143 0.220946098734231
183 0.183946662905484
233 0.180756500625115
297 0.141605137971638
379 0.12672232392481
483 0.126026391882277
616 0.112363681629986
785 0.0986243631153627
1001 0.0851769208246935
};
\addlegendentry{$w = 10^3$}
\addplot [semithick, forestgreen4416044, mark=x, mark size=3, mark options={solid}]
table {%
10 24.878751602198
12 23.2034878818923
16 19.7207321356746
20 14.2494222397268
26 8.20337578458276
33 5.48516925698243
42 3.91560068637682
54 2.63974825263067
69 1.40476684401148
88 0.814749940639558
112 0.611597067345663
143 0.380328272165173
183 0.379712173652564
233 0.26168905224263
297 0.212245184836277
379 0.198752076846063
483 0.19311614575048
616 0.140131269321117
785 0.135336968190643
1001 0.109619415552217
};
\addlegendentry{$w = 10^5$}
\end{axis}

\end{tikzpicture}
\begin{tikzpicture}

\definecolor{darkgray176}{RGB}{176,176,176}
\definecolor{darkorange25512714}{RGB}{255,127,14}
\definecolor{forestgreen4416044}{RGB}{44,160,44}
\definecolor{lightgray204}{RGB}{204,204,204}
\definecolor{steelblue31119180}{RGB}{31,119,180}

\begin{axis}[
width=0.951\fwidth,
height=0.75\fwidth,
legend cell align={left},
legend style={
  fill opacity=0.8,
  draw opacity=1,
  text opacity=1,
  at={(0.03,0.03)},
  anchor=south west,
  draw=lightgray204
},
log basis x={10},
log basis y={10},
tick align=outside,
tick pos=left,
title={$\sup_{x \in \Omega} |(u-s_n)(x)|$},
x grid style={darkgray176},
xmin=7.94288539149424, xmax=1260.24731651288,
xmode=log,
xtick style={color=black},
y grid style={darkgray176},
ymin=0.000120549730687209, ymax=0.454476560967287,
ymode=log,
ytick style={color=black}
]
\addplot [semithick, steelblue31119180, mark=x, mark size=3, mark options={solid}]
table {%
10 0.31257219493124
12 0.186998020328551
16 0.187605060390021
20 0.215556255759441
26 0.166594244688612
33 0.177383664025113
42 0.197789192776478
54 0.182960041220278
69 0.165483873981191
88 0.157432758635618
112 0.170257189309543
143 0.072510900959698
183 0.0764046912454492
233 0.0734470416347708
297 0.0363408077356482
379 0.0362359641231542
483 0.0365709189469285
616 0.0142527346969188
785 0.0137323781935741
1001 0.0133617672113902
};
\addplot [semithick, darkorange25512714, mark=x, mark size=3, mark options={solid}]
table {%
10 0.112947472378206
12 0.16821010631283
16 0.0607586301234031
20 0.0316887605773144
26 0.0425253382297561
33 0.0169680151346983
42 0.0097770355218274
54 0.00862347227024185
69 0.00596936218969679
88 0.00628388960505011
112 0.00654112951568542
143 0.00451717325742496
183 0.00338032840148306
233 0.00170409164529972
297 0.00202722104127617
379 0.00146312304847807
483 0.00114880835778131
616 0.00100711794362374
785 0.000920708492610167
1001 0.000698652193953375
};
\addplot [semithick, forestgreen4416044, mark=x, mark size=3, mark options={solid}]
table {%
10 0.181306806927847
12 0.286748407554399
16 0.141777682456115
20 0.103691409017957
26 0.0620787342861844
33 0.0309443547797053
42 0.0172943849386269
54 0.0100021131747268
69 0.00511671399251967
88 0.00345138557606939
112 0.00243442703618868
143 0.00160664557027834
183 0.00107859700639201
233 0.000774248299726832
297 0.000623218939358017
379 0.000450920688435108
483 0.000308101875326372
616 0.000279785797406173
785 0.000186131414251944
1001 0.000175277993105905
};
\end{axis}

\end{tikzpicture}
\caption{Numerical results regarding \Cref{subsec:high_dim_example}:
  Visualization of the approximation error
  $\sup_{\lambda \in \Lambda} |\lambda(u-s_n)|$
  and
  $\Vert u - s_n \Vert_{L^\infty(\Omega)}$
  (evaluated on $10^5$ randomly sampled test points) 
over the expansion size $n$ ($x$-axis) for PDE-$f$-greedy on a Poisson problem in $\Omega \subset \R^{12} $ for different weightings $w$ of boundary vs.\ interior values.
\label{fig:fgreedy_weights}}
\end{figure}
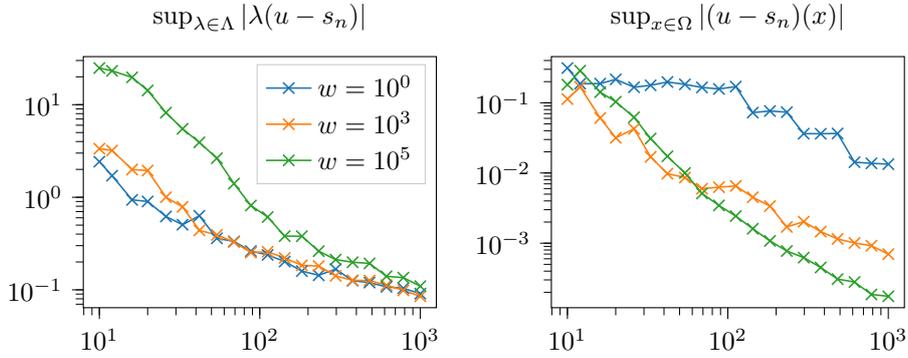

\section{Conclusion and outlook} \label{sec:outlook_ideas}

This paper considered the approximation of solutions of linear PDEs via symmetric kernel collocation with help of
PDE-$\beta$-greedy strategies.
After deriving estimates on the Kolmogorov $n$-widths of the sets of
linear functionals of those BVP, an abstract analysis of
greedy kernel methods was applied to derive worst-case optimal approximation
rates.
Extending an analysis of target-data dependent greedy kernel algorithms for standard interpolation to symmetric kernel collocation, it was possible to derive convergence rates for the full scale of values of $\beta$, with increasingly fast rates
going from target-data-independent ($\beta=0$) to for target-data dependent ($\beta>0$) algorithms.
These algorithms yield point sets which are adapted to both the domain $\Omega$ and the right hand side
of the considered BVP. 

The experiments demonstrated that for smooth target functions the PDE-greedy algorithms behave very favourably.
This even holds in comparison to FEM, for low dimensional problem where the FEM can be applied.

An important insight for the PDE-greedy schemes compared to pure function approximation is revealed by our study:
In the PDE context there is a crucial difference 
between the functional error $\sup_{\lambda \in \Lambda} | \lambda(e_n)|$ and the
residual error $\| e_n \|_{L^\infty(\Omega}$ (which coincide in the case of plain function approximation, i.e. $L=Id$).
Especially the schemes for $\beta>0$ directly aim at driving the functional error to zero. But even if the
theoretical convergence rates guarantee an identical decay for both quantities, this decay, however,
may differ by a large constant.
In order to drive the residual error down, 
additional preference to choosing boundary functionals via weighting of the
boundary selection indicator turned out to be essential. 

Overall we conclude that kernel PDE-greedy collocation algorithms are
useful approaches for approximation of linear PDE problems which admit sufficiently smooth solutions. 
In this scope the method's benefits are the use of meshless points and the ease of implementation
in a dimension-independent fashion. 
This renders those methods particularly useful in high-dimensional cases, 
where traditional mesh-based schemes such as finite volumes/differences/elements can
not easily be applied due to the increasingly complex grid management and curse of dimensionality. 
This curse of dimensionality is provably overcome with the kernel PDE-greedy approaches.
Clear limitations can, however, be seen in approximation of target functions that have singularities.

We want to comment on some directions for future research: 
An extension to nonlinear PDE problems seems possible by using the presented scheme for the linearized problems within corresponding fix-point schemes of nonlinear solvers.
Further work may as well consider ``escaping the native space'', i.e. problems
where the solution of the PDE problem is not in the RKHS of the chosen
kernel.  
We expect to be able to derive convergence orders, which are however reduced in accordance with the limited smoothness of the solution \cite{narcowich2006sobolev,wenzel2024sharp}.
Future work will also address the analysis of greedy algorithms for other methods of solving PDEs via kernel methods, 
such as unsymmetric kernel collocation or RBF-FD methods.
Further open questions include the optimality of the proven decay rates as well as approximation in different norms and work on analytic kernels such as the Gaussian kernel.

~ \\
\textbf{Acknowledgements:} The authors acknowledge the funding of the project by the Deutsche Forschungsgemeinschaft (DFG, German Research Foundation) under
Germany's Excellence Strategy - EXC 2075 - 390740016 and funding by the BMBF project ML-MORE. 
Tizian Wenzel acknowledges support from the Studienstiftung des deutschen Volkes (German national Academic Foundation) and
Daniel Winkle acknowledges funding by the International Max Planck Research School for Intelligent Systems (IMPRS-IS).
Part of the work was inspired by and done during a research stay of the first author with Prof.\ Suykens at ESAT, KU Leuven.

\noindent \textbf{Conflict of interest statement:} Not applicable.

\bibliography{references}			
\bibliographystyle{abbrv}

\appendix

\section{Proofs for Section \ref{sec:analysis_kolm_width}} \label{sec:proofs0}

\begin{proof}[Proof of \Cref{prop:upper_bound}]
We calculate:
\begin{align*}
d_n(\Lambda) &\equiv \inf_{\substack{G_n \subset \calh' \\ \dim(G_n) = n}} \sup_{\mu \in \Lambda} \dist(\mu, G_n)_{\calh'} \\
&= \inf_{\substack{G_n \subset \calh' \\ \dim(G_n) = n}} \max_{j=1,...,M} \left( \sup_{\mu \in \Lambda_j} \dist(\mu, G_n)_{\calh'} \right) \\
&\leq \inf_{\substack{G_{n_j} \subset \calh' \\ \dim(G_{n_j}) =n_j \\ \sum_{j=1}^M n_j \leq n}} \max_{j=1,...,M} \left( \sup_{\mu \in \Lambda_j} \dist(\mu, G_{n_j})_{\calh'} \right) \\
&\leq \min_{\sum_{j=1}^M n_j \leq n} \inf_{\substack{ G_{n_1} \subset \calh' \\  \dim(G_{n_1}) = n_1 }} ... \inf_{\substack{ G_{n_M} \subset \calh' \\  \dim(G_{n_M}) = n_M }} \max_{j=1,...,M} \left( \sup_{\mu \in \Lambda_j} \dist(\mu, G_{n_j})_{\calh'} \right) \\
&= \min_{\sum_{j=1}^M n_j \leq n} \max_{j=1,...,M} \left( \inf_{\substack{G_{n_j}  \subset \calh' \\ \dim(G_{n_j})=n_j}} \sup_{\mu \in \Lambda_j} \dist(\mu, G_{n_j})_{\calh'} \right) \\
&= \min_{\sum_{j=1}^M n_j \leq n} \max_{j=1,...,M} \left( d_{n_j}(\Lambda_j) \right).
\end{align*}
\end{proof}

\begin{proof}[Proof of \Cref{th:kolmogorov_estimate_Lambda1}]
Based on Eq.\ \eqref{eq:kolmogorov_dual} and Eq.\ \eqref{eq:power_func_generalized} we have the following representation of the Kolmogorov $n$-width $d_n(\Lambda_L)$ (using the Riesz representer $v_\mu$ of a functional $\mu$):
\begin{align}
\begin{aligned}\label{eq:kolmogorov_proof}
d_n(\Lambda_L, \ns') &= \inf_{\substack{H_n \subset \ns \\ \dim(H_n) = n}} \sup_{\mu \in \Lambda_L} \Vert v_\mu - \Pi_{H_n}(v_\mu) \Vert_{\ns} \\
&= \inf_{\substack{H_n \subset \ns \\ \dim(H_n) = n}} \sup_{\mu \in \Lambda_L} \sup_{0 \neq u \in \ns} \frac{|\mu(u - \Pi_{H_n}(u))|}{\Vert u \Vert_{\ns}} \\
&= \inf_{\substack{H_n \subset \ns \\ \dim(H_n) = n}} \sup_{x \in \Omega} \sup_{0 \neq u \in \ns} \frac{|L(u - \Pi_{H_n}(u))(x)|}{\Vert u \Vert_{\ns}} \\
&= \inf_{\substack{H_n \subset \ns \\ \dim(H_n) = n}} \sup_{0 \neq u \in \ns} \frac{\Vert L(u - \Pi_{H_n}(u)) \Vert_{L^\infty(\Omega)}}{\Vert u \Vert_{\ns}} \\
&\leq \inf_{\substack{H_n \subset \ns \\ H_n = \Sp \{ v_{\lambda_1}, .., v_{\lambda_n} \} \\ \lambda_1, .., \lambda_n \in \Lambda_L}} \sup_{0 \neq u \in \ns} \frac{\Vert L(u - \Pi_{H_n}(u)) \Vert_{L^\infty(\Omega)}}{\Vert u \Vert_{\ns}}.
\end{aligned}
\end{align}
For the third equality $\Lambda_L = \{ \delta_x \circ L ~ | ~ x \in \Omega \}$ was used and in the final step the infimum was upper bounded by the infimum computed over spaces spanned by any $n$ elements of $\Lambda_L$.

Since $u, \Pi_{H_n}(u) \in \ns \asymp H^\tau(\Omega)$ and the differential operator $L$ is of degree $2$, it follows that $Lu, L\Pi_{H_n}(u) \in H^{\tau - 2}(\Omega)$. 
Recall that $x_\lambda \in \Omega \cup \partial\Omega$ is the point related to the functional $\lambda \in \Lambda_L \cup \Lambda_B \subset \ns'$, 
i.e.\ where the evaluation takes place within $\Omega$ (for evaluations of $\lambda \in \Lambda_L$) or on its boundary $\partial \Omega$ (for evaluations of the boundary functionals $\lambda \in \Lambda_B$).
Then (because of the construction of $L\Pi_{H_n}(u)$) it holds 
\begin{align*}
(L\Pi_{H_n}u)(x_{\lambda_i}) &= (Lu)(x_{\lambda_i}) &&\forall i = 1, \dots, n \\
\Leftrightarrow (Lu - L\Pi_{H_n}u)(x_{\lambda_i}) &= 0 &&\forall i = 1, \dots, n,
\end{align*}
i.e.\ $Lu - L\Pi_{H_n}(u) \in H^{\tau - 2}(\Omega)$ is a function which is zero at least on the set $X_n := \{x_{\lambda_i} \}_{i=1}^n$. 
Therefore the sampling inequalities from Theorem \ref{th:estimate_derivatives_wendland} can be applied, provided $h\equiv h_{X_n,\Omega} \leq h_0$ (using $m = 0, q = \infty, p = 2, \tau - 2$ instead of $\tau$ and $Lu - L\Pi_{H_n}(u)$ instead of $u$)
\begin{align*}
\Vert Lu - L\Pi_{H_n}(u) \Vert_{L^\infty(\Omega)} &\leq C h^{\tau-2-d/2} \cdot |Lu - L\Pi_{H_n}(u)|_{H^{\tau-2}(\Omega)}.
\end{align*}
To bound the infimum over all $H_n \subset \ns$ that are induced by $\Lambda_L$-functionals, one can now consider functionals related to asymptotically uniformly distributed points. 
For such a choice of points the fill distance $h_{X_n,\Omega}$ decays according to $h_{X_n,\Omega} \leq c n^{-1/d}$, such that we obtain
\begin{align*}
\Vert Lu - L\Pi_{H_n}(u) \Vert_{L^\infty(\Omega)} &\leq C n^{\frac{1}{2} - \frac{\tau-2}{d}} \cdot |Lu - L\Pi_{H_n}(u)|_{H^{\tau-2}(\Omega)} \\
&\leq \tilde{C} n^{\frac{1}{2} - \frac{\tau-2}{d}} \cdot \Vert Lu - L\Pi_{H_n}(u) \Vert_{H^{\tau-2}(\Omega)}.
\end{align*}
This result can be plugged into the (in)equality chain above for the Kolmogorov $n$-width $d_n(\Lambda_L)$, such that we obtain

\begin{align*}
d_n(\Lambda_L, \ns') &\leq C n^{\frac{1}{2} - \frac{\tau-2}{d}} \cdot \inf_{\substack{H_n \subset \ns \\ \dim(H_n) = n}} \sup_{0 \neq u \in \ns} \frac{\Vert Lu - L\Pi_{H_n}(u)\Vert_{H^{\tau-2}(\Omega)}}{\Vert u \Vert_{\ns}} \\
&\leq C' n^{\frac{1}{2} - \frac{\tau-2}{d}} \cdot \inf_{\substack{H_n \subset \ns \\ \dim(H_n) = n}} \sup_{0 \neq u \in \ns} \frac{\Vert u - \Pi_{H_n}(u)\Vert_{H^{\tau}(\Omega)}}{\Vert u \Vert_{\ns}} \\
&\leq C'' n^{\frac{1}{2} - \frac{\tau-2}{d}} \cdot \inf_{\substack{H_n \subset \ns \\ \dim(H_n) = n}} \sup_{0 \neq u \in \ns} \frac{\Vert u - \Pi_{H_n}(u)\Vert_{\ns}}{\Vert u \Vert_{\ns}} \\
&\leq C'' n^{\frac{1}{2} - \frac{\tau-2}{d}} \cdot \inf_{\substack{H_n \subset \ns \\ \dim(H_n) = n}} \sup_{0 \neq u \in \ns} \frac{\Vert u \Vert_{\ns}}{\Vert u \Vert_{\ns}} \\
&\leq C'' n^{\frac{1}{2} - \frac{\tau-2}{d}}.
\end{align*}
The second inequality is due to \cite[Chapter 1: Lemma 7.2+Remark 8.1]{lions2012non} and the triangle inequality.
\end{proof}

\begin{proof}[Proof of \Cref{th:kolmogorov_estimate_Lambda2}]

The proof follows the same strategy as the one of Theorem \ref{th:kolmogorov_estimate_Lambda1}.
We start by decomposing the boundary $\partial \Omega$ into smooth manifolds $\mathbb{M}_i$, i.e.\ $\partial \Omega = \cup_{i=1}^N \mathbb{M}_i$.

Based on Eq.\ \eqref{eq:kolmogorov_dual} and Eq.\ \eqref{eq:power_func_generalized} we have analogously to the calculation in Eq.\ \eqref{eq:kolmogorov_proof} the following representation of the Kolmogorov $n$-width $d_n(\Lambda_B)$ (using the Riesz representer $v_\mu$ of a functional $\mu$):
\begin{align}
\label{eq:kolm_width_boundary}
d_n(\Lambda_B, \ns') &= \inf_{\substack{H_n \subset \ns \\ \dim(H_n) = n}} \sup_{\mu \in \Lambda_B} \Vert v_\mu - \Pi_{H_n}(v_\mu) \Vert_{\ns} \notag \\
&\leq \inf_{\substack{H_n \subset \ns \\ H_n = \Sp \{ v_{\lambda_1}, .., v_{\lambda_n} \} \\ \lambda_1, .., \lambda_n \in \Lambda_B}} \sup_{0 \neq u \in \ns} \frac{\Vert u - \Pi_{H_n}(u) \Vert_{L^\infty(\partial \Omega)}}{\Vert u \Vert_{\ns}}.
\end{align}
As all involved function are continuous, we waived to include the trace operator.
In order to bound $\Vert u - \Pi_{H_n}(u) \Vert_{L^\infty(\partial \Omega)} = \max_{i=1, ..., N} \Vert u - \Pi_{H_n}(u) \Vert_{L^\infty(\mathbb{M}_i)}$, 
we leverage standard restriction theorems of RKHS to conclude that $u - \Pi_{H_n}(u)|_{\mathbb{M}_i}$ $\in$ $\mathcal{H}_k(\mathbb{M}_i) \asymp H^{\tau - 1/2}(\mathbb{M}_i)$.
We recall that the functionals $\lambda_1, \dots, \lambda_n$ are associated to the interpolation points $x_{\lambda_1}, ...$, $x_{\lambda_n}$, i.e.\ $u - \Pi_{H_n}(u)$ is a function with zeros in these interpolation points $x_{\lambda_1}, ...$, $x_{\lambda_n}$. 
Therefore the sampling inequalities from Theorem \ref{th:sampling_inequ_manifolds} can be applied to any smooth manifold $\mathbb{M}_i$, $i=1, ..., N$,
provided $h_{X_n, \mathbb{M}_i} \leq C_{\mathbb{M}_i}$ (using $k = d-1, \mu = 0, q = \infty, p = 2, t = \tau - 1/2$ and $u - \Pi_{H_n}(u)$ instead of $u$, which suffice the assumptions):
\begin{align}
\label{eq:appl_sampling_manifold}
\Vert u - \Pi_{H_n}(u) \Vert_{L^\infty(\mathbb{M}_i)} &\leq C_i h_{X_n \cap \mathbb{M}_i, \mathbb{M}_i}^{\tau-1/2-(d-1)/2} \cdot |u - \Pi_{H_n}(u)|_{W_2^{\tau-1/2}(\mathbb{M}_i)},
\end{align}
where $h_{X_n \cap \mathbb{M}_i, \mathbb{M}_i}$ is the fill distance related to $\{x_{\lambda_1}, ..., x_{\lambda_n} \} \cap \mathbb{M}_i \subset \mathbb{M}_i \subset \partial \Omega$. 
Observing that $\mathcal{H}_k(\mathbb{M}_i)$ is norm-equivalent to $H^{\tau - 1/2}(\mathbb{M}_i)$ for all $i = 1, ..., N$ (see e.g.\ \cite{fuselier2012scattered}),
we can estimate the semi-norm $| u - \Pi_{H_n}(u) |_{W_2^{\tau-1/2}(\mathbb{M}_i)}$ as follows:
\begin{align*}
|u - \Pi_{H_n}(u)|_{W_2^{\tau-1/2}(\mathbb{M}_i)} &\leq \Vert u - \Pi_{H_n}(u) \Vert_{W_2^{\tau-1/2}(\mathbb{M}_i)} \\
&\leq C_i' \Vert u - \Pi_{H_n}(u) \Vert_{\mathcal{H}_k(\mathbb{M}_i)} \\
&\leq C_i' \Vert u - \Pi_{H_n}(u) \Vert_{\mathcal{H}_k(\Omega)} \\
&\leq C_i' \Vert u \Vert_{\mathcal{H}_k(\Omega)}
\end{align*}
The infimum within Eq.~\eqref{eq:kolm_width_boundary} over all $H_n \subset \calh$ that are induced by $\Lambda_B$-functionals can be upper bounded by considering asymptotically uniformly distributed (wrt.\ the intrinsic distance on the manifold $\partial \Omega$) points $\{x_{\lambda_1}, ..., x_{\lambda_n} \} \subset \partial \Omega$. 
For such a choice of points the fill distance $h_{X_n \cap \mathbb{M}_i, \mathbb{M}}$ decays according to $h_{X_n, \mathbb{M}} \leq c n^{-1/(d-1)}$ (where the constant $c$ also depends on the number $N$ of manifolds $\mathbb{M}_i$), 
such that Eq.~\eqref{eq:appl_sampling_manifold} turns into (due to $-\frac{1}{d-1}(\tau - 1/2 - (d-1)/2) = \frac{1}{2} - \frac{\tau - 1/2}{d-1}$):
\begin{align*}
\Vert u - \Pi_{H_n}(u) \Vert_{L^\infty(\mathbb{M}_i)} 
&\leq C_i'' n^{\frac{1}{2} - \frac{\tau-1/2}{d-1}} \cdot |u - \Pi_{H_n}(u)|_{W_2^{\tau-1/2}(\partial \Omega)} \\
&\leq \tilde{C}_i n^{\frac{1}{2} - \frac{\tau-1/2}{d-1}} \cdot \Vert u \Vert_{\ns} \\
\Rightarrow \max_{i=1, ..., N} \Vert u - \Pi_{H_n}(u) \Vert_{L^\infty(\mathbb{M}_i)} &\leq \left( \max_{i=1, ..., N} \tilde{C}_i \right) \cdot n^{\frac{1}{2} - \frac{\tau-1/2}{d-1}} \cdot \Vert u \Vert_{\ns}.
\end{align*}
This result can be plugged into the (in-)equality chain for the Kolmogorov $n$-width $d_n(\Lambda_B)$ of Eq.~\eqref{eq:kolm_width_boundary},
such that we obtain directly
\begin{align*}
d_n(\Lambda_B, \ns) 
&\leq \left( \max_{i=1, ..., N} \tilde{C}_i \right) \cdot n^{\frac{1}{2} - \frac{\tau-1/2}{d-1}}.
\end{align*}
\end{proof}

\begin{proof}[Proof of \Cref{th:kolmogorov_estimate_Lambda}]
For this proof we introduce $\alpha := -(\frac{1}{2} - \frac{\tau - 2}{d}) > 0$ and $\beta := -(\frac{1}{2} - \frac{\tau - 1/2}{d-1}) > 0$, i.e.\ it holds $\beta > \alpha > 0$.
We choose
\begin{align*}
n_2(n) &= \lceil n^{\alpha / \beta} \rceil \\
n_1(n) &= \lfloor n - n^{\alpha / \beta} \rfloor = \lfloor n \cdot (1 - n^{\alpha/\beta - 1}) \rfloor. 
\end{align*}
For $n \geq 2$ we have $0 < 1 - 2^{\alpha/\beta - 1} \leq 1 - n^{\alpha / \beta - 1} \leq 1$, therefore we have $n_1(n) \asymp n$. 
In more details, there are constants $c_1, c_2>0$ such that $c_1 n \leq n_1(n) \leq c_2 n$ for  $n\geq 2$, $c_1, c_2$ are independent of 
$n$, and $c_1<1$.

Thus, using \Cref{prop:upper_bound}, \Cref{th:kolmogorov_estimate_Lambda1} and \Cref{th:kolmogorov_estimate_Lambda2} gives
\begin{align*} 
d_n(\Lambda) &\leq \inf_{n_1 + n_2 \leq n} \max \left( d_{n_1}(\Lambda_L), d_{n_2}(\Lambda_B) \right) \notag \\
&\leq \max(C_L, C_B) \cdot \inf_{n_1 + n_2 \leq n} \max \left( n_1^{-\alpha}, n_2^{-\beta} \right) \notag \\
&\leq \max(C_L, C_B) \cdot \max \left( n_1(n)^{-\alpha}, n_2(n)^{-\beta} \right). \notag
\end{align*}
A tiny extra calculation helps to resolve the expressions $n_1(n)^{-\alpha}$  and $n_2(n)^{-\beta}$:
\begin{align*}
n_1(n)^{-\alpha} &\leq  (c_1 n)^{-\alpha} = c_1^{-\alpha} n^{-\alpha} \\
n_2(n)^{-\beta} &\leq  \lceil n^{\alpha / \beta} \rceil^{-\beta} \leq (n^{\alpha / \beta})^{-\beta} = n^{-\alpha}.
\end{align*}
Therefore we finally obtain
\begin{align*}
d_n(\Lambda) 
&\leq \max(C_{L}, C_{B}) \cdot \max(c_1^{-\alpha}, 1) \cdot n^{-\alpha}\\
&\leq \max(C_{L}, C_{B}) \cdot c_1^{-\alpha} \cdot n^{-\alpha}.
\end{align*}
Recalling the abbreviation $-\alpha = \frac{1}{2} - \frac{\tau - 2}{d}$ we obtain the statement.
\end{proof}

\section{Proofs for Section \ref{sec:analysis_fgreedy}} \label{sec:proofs}

\begin{proof}[Proof of Lemma \ref{lem:estimate_product}]
Let 
\begin{equation*}
R_n^2:= \left[ \prod_{i=n+1}^{2n} \left( \frac{\lambda_{i+1}(e_i)}{P_{\Lambda_i}(\lambda_{i+1})} \right)^2 \right]^{1/n}.
\end{equation*}
The geometric arithmetic mean inequality gives
\begin{align*} 
R_n^2 
&\leq \frac{1}{n} \sum_{i=n+1}^{2n} \left(\frac{\lambda_{i+1}(e_i)}{P_{\Lambda_i}(\lambda_{i+1})} \right)^2
= \frac{1}{n} \left( \sum_{i=0}^{2n} \left( \frac{\lambda_{i+1}(e_i)}{P_{\Lambda_{i}}(\lambda_{i+1})} \right)^2 
- \sum_{i=0}^{n} \left(\frac{\lambda_{i+1}(e_i)}{P_{\Lambda_i}(\lambda_{i+1})} \right)^2 \right).
\end{align*}
We now use Eq.\ \eqref{eq:newtown_coefficient} applied to $s_{2n+1}$ and $s_{n+1}$, and the properties of orthogonal projections to obtain
\begin{align*} 
R_n^2
&\leq \frac{1}{n} \left( \Vert s_{2n+1} \Vert_{\ns}^2 - \Vert s_{n+1}\Vert_{\ns}^2 \right)
\leq \frac{1}{n}  \left( \Vert f \Vert_{\ns}^2 - \Vert s_{n+1} \Vert_{\ns}^2 \right)\\
&= \frac{1}{n}  \Vert f - s_{n+1} \Vert_{\ns}^2
= \frac{1}{n}  \Vert e_{n+1} \Vert_{\ns}^2.
\end{align*}
It follows that $R_n \leq n^{-1/2} \cdot \Vert e_{n+1} \Vert_{\ns}$, and thus
\begin{equation*}
\left[ \prod_{i=n+1}^{2n} |\lambda_{i+1}(e_i)| \right]^{1/n} \leq n^{-1/2} \cdot \Vert e_{n+1} \Vert_{\ns} \cdot \left[ \prod_{i=n+1}^{2n} 
P_{\Lambda_i}(\lambda_{i+1}) \right]^{1/n}.
\end{equation*}
\end{proof}

\begin{proof}[Proof of Lemma \ref{lem:beta_greedy}]
We prove the two cases separately:
\begin{enumerate}[label={\alph*)}]
\item For $\beta = 0$, i.e.\ the PDE-$P$-greedy algorithm, this is the standard power function estimate in conjunction with the PDE-$P$-greedy selection criterion 
$P_{\Lambda_n}(\lambda_{n+1}) = \sup_{\lambda \in \Lambda} P_{\Lambda_n}(\lambda)$. For $\beta = 1$ this holds with equality as it is simply the selection criterion 
of PDE-$f$-greedy since we have here $\lambda_{n+1}(e_n) = \sup_{\lambda \in \Lambda} | r_n |$. We thus consider $\beta \in (0, 1)$ and let $\tilde{\lambda}_{i+1} \in 
\Lambda$ be such that $|\tilde{\lambda}_{i+1}(e_i)| = \sup_{\lambda \in \Lambda} | \lambda(e_i) |$. Then the selection criterion from 
Eq.~\eqref{eq:beta_greedy_selection_criterion} gives
\begin{equation*}
|\lambda(e_i)|^\beta \cdot P_{\Lambda_i}(\lambda)^{1-\beta} \leq |\lambda_{i+1}(e_i)|^\beta \cdot P_{\Lambda_i}(\lambda_{i+1})^{1-\beta}\;\; \forall \lambda \in \Lambda,
\end{equation*}
and in particular
\begin{align*}
P_{\Lambda_i}(\tilde{\lambda}_{i+1}) \leq \frac{|\lambda_{i+1}(e_i)|^{\frac{\beta}{1-\beta}}}{|\tilde{\lambda}_{i+1}(e_i)|^{\frac{\beta}{1-\beta}}} \cdot P_{\Lambda_i}(\lambda_{i+1}).
\end{align*}
Using this bound with the standard power function estimate gives
\begin{align*}
\sup_{\lambda \in \Lambda} | \lambda(e_i) | &= |\tilde{\lambda}_{i+1}(e_i)| \leq P_{\Lambda_i}(\tilde{\lambda}_{i+1}) \cdot \Vert f - s_i \Vert_{\ns} \\
&\leq \frac{|\lambda_{i+1}(e_i)|^{\frac{\beta}{1-\beta}}}{|\tilde{\lambda}_{i+1}(e_i)|^{\frac{\beta}{1-\beta}}} \cdot P_{\Lambda_i}(\lambda_{i+1}) \cdot \Vert f - s_i \Vert_{\ns} \\
&= \frac{|\lambda_{i+1}(e_i)|^{\frac{\beta}{1-\beta}}}{\sup_{\lambda \in \Lambda} | \lambda(e_i) |^{\frac{\beta}{1-\beta}}} \cdot P_{\Lambda_i}(\lambda_{i+1}) \cdot \Vert f - s_i \Vert_{\ns}.
\end{align*}
This can be rearranged for $\sup_{\lambda \in \Lambda} | \lambda(e_i) |$ to yield the final result.
\item For $\beta \in (1, \infty)$, the selection criterion from Eq.~\eqref{eq:beta_greedy_selection_criterion} can be rearranged to
\begin{align*}
|\lambda(e_i)|^\beta &\leq \frac{|\lambda_{i+1}(e_i)|^\beta}{P_{\Lambda_i}(\lambda_{i+1})^{\beta - 1}} \cdot P_{\Lambda_i}(\lambda)^{\beta - 1} \qquad \forall {\lambda \in \Lambda \setminus \Lambda_i},
\end{align*}
and taking the supremum $\sup_{\lambda \in \Lambda \setminus \Lambda_i}$ gives
\begin{align*}
\sup_{\lambda \in \Lambda} | \lambda(e_i) | &\leq \frac{|\lambda_{i+1}(e_i)|}{P_{\Lambda_i}(\lambda_{i+1})^{\frac{\beta - 1}{\beta}}} \cdot \sup_{\lambda \in \Lambda} P_{\Lambda_i}(\lambda)^{\frac{\beta - 1}{\beta}} \qquad \forall {\lambda \in \Lambda \setminus \Lambda_i}.
\end{align*}
For $\beta = \infty$, the selection criterion of the PDE-$f/P$-greedy algorithm can be directly rearranged to yield the statement (when using the notation 
$1/\infty = 0$).
\end{enumerate}
\end{proof}

\begin{proof}[Proof of Theorem \ref{th:final_result}]
We prove the two cases separately:
\begin{enumerate}[label={\alph*)}]
\item For $\beta = 0$, i.e. PDE-$P$-greedy, Eq.~\eqref{eq:bound_ri_first} gives $\sup_{\lambda \in \Lambda} |\lambda(e_i)| \leq P_{\Lambda_i}(\lambda_{i+1}) \cdot \Vert e_i \Vert_{\ns}$. 
Taking the product $\prod_{i=n+1}^{2n}$ and the $n$-th root in conjunction with the estimate $\Vert e_i \Vert_{\ns} \leq \Vert e_{n+1} \Vert_{\ns}$ for $i = 
n+1, \dots, 2n$ gives the result. 

For $\beta \in (0, 1]$, we start by reorganizing the estimate \eqref{eq:bound_ri_first} of Lemma \ref{lem:beta_greedy} to get
\begin{equation*}
|\lambda_{i+1}(e_i)| \geq \left( \sup_{\lambda \in \Lambda} \lambda(e_i)^{1/\beta} \right) / \left(P_{\Lambda_i}(\lambda_{i+1})^{\frac{1-\beta}{\beta}} \cdot 
\Vert e_i \Vert_{\ns}^{\frac{1-\beta}{\beta}}\right),
\end{equation*}
and we use this to bound the left hand side of Eq.~\eqref{eq:estimate_product} as
\begin{align*} 
n^{-1/2} \cdot &\Vert e_{n+1} \Vert_{\ns} \cdot \left[ \prod_{i=n+1}^{2n} P_{\Lambda_i}(\lambda_{i+1}) \right]^{1/n} \geq \left[ \prod_{i=n+1}^{2n} |\lambda_{i+1}(e_i)| 
\right]^{1/n} \\
&\geq \left[ \prod_{i=n+1}^{2n} \left( \sup_{\lambda \in \Lambda} \lambda(e_i)^{1/\beta} \right) / \left(P_{\Lambda_i}(\lambda_{i+1})^{\frac{1-\beta}{\beta}} \cdot 
\Vert e_i \Vert_{\ns}^{\frac{1-\beta}{\beta}}\right) \right]^{1/n} \\
&= \left[ \prod_{i=n+1}^{2n} \left( \sup_{\lambda \in \Lambda} \lambda(e_i)^{1/\beta} \right) \right]^{1/n} \left[ \prod_{i=n+1}^{2n} P_{\Lambda_i}(\lambda_{i+1})^{\frac{1-\beta}{\beta}} \cdot 
\Vert e_i \Vert_{\ns}^{\frac{1-\beta}{\beta}} \right]^{-1/n}.
\end{align*}
Rearranging the factors, and using again the fact that $\Vert e_i \Vert_{\ns} \leq \Vert e_{n+1} \Vert_{\ns}$ for $i = 
n+1, \dots, 2n$, gives
\begin{align*}
&\left[ \prod_{i=n+1}^{2n} \left( \sup_{\lambda \in \Lambda} \lambda(e_i)^{1/\beta} \right) \right]^{1/n} \\
&\leq n^{-1/2} \cdot \Vert e_{n+1} \Vert_{\ns} \cdot \left[\prod_{i=n+1}^{2n} P_{\Lambda_i}(\lambda_{i+1})^{1/\beta} \right]^{1/n} \cdot \left[ \prod_{i=n+1}^{2n} \Vert e_i 
\Vert_{\ns}^{\frac{1-\beta}{\beta}}  \right]^{1/n} \\
&\leq n^{-1/2} \cdot \Vert e_{n+1} \Vert_{\ns} \cdot \left[ \prod_{i=n+1}^{2n} P_{\Lambda_i}(\lambda_{i+1})^{1/\beta} \right]^{1/n} \cdot \Vert e_{n+1} 
\Vert_{\ns}^{\frac{1-\beta}{\beta}} \\
&\leq n^{-1/2} \cdot \Vert e_{n+1} \Vert_{\ns}^{1/\beta} \cdot \left[ \prod_{i=n+1}^{2n} P_{\Lambda_i}(\lambda_{i+1})^{1/\beta} \right]^{1/n}.
\end{align*}
Now, the inequality can be raised to the exponent $\beta$ to give the final statement.

\item For $\beta \in (1, \infty]$ we proceed similarly by first rewriting Eq.~\eqref{eq:bound_ri_second} of Lemma \ref{lem:beta_greedy} as
\begin{equation*}
|e_i(x_{i+1})| \geq \left(\Vert e_i \Vert_{L^\infty(\Omega)} \cdot P_i(x_{i+1})^{1-1/\beta}\right)/\left(\Vert P_i \Vert_{L^\infty(\Omega)}^{1-1/\beta}\right),
\end{equation*}
and we lower bound the left hand side of Equation \eqref{eq:estimate_product} as
\begin{align*} 
n^{-1/2} \cdot &\sup_{\lambda \in \Lambda} | e_{n+1} | \cdot \left[ \prod_{i=n+1}^{2n} P_{\Lambda_i}(\lambda_{i+1}) \right]^{1/n} \geq \left[ \prod_{i=n+1}^{2n} |\lambda_{i+1}(e_i)| 
\right]^{1/n} \\
&\geq \left[ \prod_{i=n+1}^{2n} \left( \sup_{\lambda \in \Lambda} | e_i | \cdot P_{\Lambda_i}(\lambda_{i+1})^{1-1/\beta}\right)/\left( \sup_{\lambda \in \Lambda} P_{\Lambda_i}(\lambda)^{1-1/\beta}\right)\right]^{1/n}.
\end{align*}
Rearranging for $\left[ \prod_{i=n+1}^{2n} \sup_{\lambda \in \Lambda} | \lambda(e_i) | \right]^{1/n}$ yields
\begin{align*}
&\left[ \prod_{i=n+1}^{2n} \sup_{\lambda \in \Lambda} |\lambda(e_i) | \right]^{1/n} \\
&\leq n^{-1/2} \cdot \Vert e_{n+1} \Vert_{\ns} \cdot \left[\prod_{i=n+1}^{2n} \sup_{\lambda \in \Lambda} P_{\Lambda_i}(\lambda)^{1-1/\beta} \right]^{1/n} \cdot \left[ 
\prod_{i=n+1}^{2n} P_{\Lambda_i}(\lambda_{i+1})^{1/\beta} \right]^{1/n},
\end{align*}
which gives the final result due to $\sup_{\lambda \in \Lambda} P_i(\lambda) \leq 1$ for all $i = 0, 1, ..$.
\end{enumerate}
\end{proof}

\end{document}